\newcounter{theorem}
\newtheorem{thm}[theorem]{Theorem}
\newtheorem{lemma}[theorem]{Lemma}
\newtheorem{prop}[theorem]{Proposition}
\newtheorem{defn}[theorem]{Definition}
\newtheorem{thmx}{Theorem}
\theoremstyle{remark}
\newtheorem*{remark*}{Remark}
\newtheorem{remark}[theorem]{Remark}
\newtheorem{example}[theorem]{Example}
\numberwithin{equation}{section}
\numberwithin{theorem}{section}
\newcommand{\e}{\varepsilon}
\newcommand{\N}{\mathbb{N}}
\newcommand{\Z}{\mathbb{Z}}
\newcommand{\supp}{\mathrm{supp}}
\newcommand{\id}{\mathrm{id}}
\newcommand{\homeo}{\mathrm{Homeo}}
\newcommand{\diam}{\mathrm{diam}}
\renewcommand{\setminus}{\backslash}
\renewcommand{\emptyset}{\varnothing}
\begin{document}

\title[Simplicity of Commutator Subgroups of Full Groups]{Comparison and Simplicity of Commutator Subgroups of Full Groups}

\author{Hung-Chang Liao}
\address{Department of Mathematics and Statistics, University of Ottawa,
	\newline 150 Louis-Pasteur Pvt, Ottawa, ON, Canada K1N 6N5}
\email{hliao@uottawa.ca}

\maketitle

\begin{abstract} We show that for a minimal, second countable, locally compact Hausdorff \'etale groupoid whose unit space is homeomorphic to the Cantor set, if the groupoid has comparison then the commutator subgroup of its full group is simple. This generalizes a result of Bezuglyi and Medynets for Cantor minimal systems and complements Matui's results for topological full groups. 
\end{abstract}

\section{Introduction}
\renewcommand{\thethmx}{\Alph{thmx}}

Given a dynamical system, its full group, roughly speaking, consists of automorphisms of the space that respect the orbits. In ergodic theory, the notion of full groups was introduced by Dye  (\cite{Dye:AJM1, Dye:AJM2}). His celebrated reconstruction theorem shows that for ergodic measure-preserving transformations, the full group is a complete invariant for orbit equivalence. In the topological setting, Giordano, Putnam, and Skau (\cite{GPS:Israel}) initiated a systematic study of the full groups (building upon earlier work of Krieger \cite{Krieger:MathAnn} and Renault \cite{Renault:Book}). For a Cantor minimal system, they defined its full group, a direct analogue of the full group for a measure-preserving transformation, and its topological full group, which consists of elements in the full group that are continuous in a suitable sense. They proved that these are complete invariants for orbit equivalence and flip conjugacy, respectively. 

There has been a considerable amount of research on various aspects of these groups, in particular their algebraic properties. In the measurable setting the full groups were shown by Eigen in \cite{Eigen:Israel} (which is inspired by a work of Fathi \cite{Fathi:Israel}). In \cite{Matui:IJM} Matui proved that for the topological full group of a Cantor minimal system, its commutator subgroup is simple (and finitely generated if the system is a minimal subshift). Later, thanks to the remarkable work of Juschenko and Monod on amenability (\cite{JM:Annals}), these commutator subgroups turned out to be the first examples of simple, infinite, finitely generated amenable groups.

The definitions of full groups and topological full groups were generalized by Matui (\cite{Matui:PLMS, Matui:Crelle}) to \'etale groupoids whose unit spaces are homeomorphic to the Cantor set. Note that the assumption on the unit space is natural because full groups and topological full groups might become uninteresting when the space is connected (see the discussion after Proposition \ref{prop:MeasurePerserving}). Many aforementioned results (but not all) were also established at this level of generality. In particular, it is proved in \cite{Matui:Crelle} that for a minimal essentially principal \'etale groupoid that is either almost finite or purely infinite, the commutator subgroup of the topological full group is simple.\footnote{This indeed generalizes the result for Cantor minimal systems because the transformation groupoids arising from these systems are almost finite; see \cite[Lemma 6.3]{Matui:PLMS}.} It is natural to ask whether the same simplicity result holds for the commutator subgroups of full groups of \'etale groupoids. For Cantor minimal systems this is confirmed by Bezuglyi and Medynets (\cite{BM:Colloq}). In this paper we extend their result to a large class of minimal \'etale groupoids, namely those possessing the comparison property. 

\begin{thmx} [Theorem \ref{thm:Simplicity}]
	Let $\mathcal{G}$ be a minimal, second countable, \'etale groupoid whose unit space is a Cantor space. Suppose $\mathcal{G}$ has comparison. Then the commutator subgroup $[\mathcal{G}]'$ of the full group $[\mathcal{G}]$ is simple.
\end{thmx}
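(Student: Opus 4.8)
The plan is to follow the Bezuglyi--Medynets strategy, with the comparison hypothesis playing the role that the concrete dynamics of a Cantor minimal system play in their argument. Fix a nontrivial normal subgroup $N \trianglelefteq [\mathcal{G}]'$; the goal is to show $N = [\mathcal{G}]'$. For a clopen $U \subseteq \mathcal{G}^{(0)}$ I write $[\mathcal{G}]|_U$ for the subgroup of elements of $[\mathcal{G}]$ that are the identity off $U$, and set $H_U := [\mathcal{G}]' \cap [\mathcal{G}]|_U$. I will use freely that $\mathcal{G}^{(0)}$ is a Cantor set, so supports can be separated by clopen sets, and that minimality forces every nonempty clopen set to have strictly positive mass under every $\mathcal{G}$-invariant probability measure; since these measures form a weak-$*$ compact set, each such mass is bounded below by a positive constant.

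The first step is localization. I would choose $1 \neq g \in N$ and, using that $g$ is a nontrivial homeomorphism of a Cantor set, a nonempty clopen $U$ with $g(U) \cap U = \emptyset$. For $a, b \in H_U$ one computes, writing $c := g a g^{-1}$ (supported in $g(U)$, hence commuting with the $U$-supported $a,b$),
\[
  [[g,a], b] = [a^{-1}, b],
\]
and the left-hand side lies in $N$ because $[g,a] \in N$ (as $g \in N \trianglelefteq [\mathcal{G}]'$ and $a \in [\mathcal{G}]'$) and $N$ is normal. Letting $a,b$ range over $H_U$ shows $(H_U)' \subseteq N$, and by the same token $(H_{\psi(U)})' \subseteq N$ for every $\psi \in [\mathcal{G}]'$, since $\psi (H_U)' \psi^{-1} = (H_{\psi(U)})'$ and $N$ is normal.

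The second step is where comparison enters, to supply both involutions and transitivity. Using comparison I would produce abundant \emph{exchanges}: whenever $A, B$ are disjoint clopen sets that are $\mathcal{G}$-equidecomposable, the implementing bisection yields an involution in $[\mathcal{G}]$ swapping $A$ and $B$ and fixing the rest. Shrinking $U$ if necessary so that it has two disjoint equidecomposable copies, a doubling argument with such an exchange shows $H_U$ is perfect, so in fact $H_U \subseteq N$. Comparison also gives transitivity on small sets: if $V$ is clopen with $\mu(V) < \mu(U)$ for every invariant $\mu$, then $V \prec U$, and after correcting parity (by composing with one further exchange so as to remain in $[\mathcal{G}]'$) I obtain $\psi \in [\mathcal{G}]'$ with $\psi(V) \subseteq U$; conjugating by $\psi$ sends $H_V$ into $H_U \subseteq N$, whence $H_V \subseteq N$ for every sufficiently small clopen $V$. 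For the final assembly I would cover $\mathcal{G}^{(0)}$ by finitely many clopen sets $V_1, \dots, V_n$ each small enough for the previous step, and show $[\mathcal{G}]'$ is generated by $\bigcup_i H_{V_i}$ together with exchanges between the pieces; as all of these lie in $N$, this forces $[\mathcal{G}]' \subseteq N$.

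I expect the main obstacle to lie in the second and third steps rather than in the purely algebraic first step. Concretely, the delicate points are: (a) extracting honest full-group elements and involutions from the bisections furnished by comparison, and in particular upgrading the one-sided subequivalence $V \prec U$ to a global element of $[\mathcal{G}]$ that moreover lies in the commutator subgroup --- this is exactly where the abelianization of $[\mathcal{G}]$ (an index/parity invariant) must be tracked and killed off using auxiliary exchanges; and (b) the generation statement that $[\mathcal{G}]'$ is generated by the local subgroups $H_V$ over small clopen $V$ together with exchanges, which is the step that genuinely needs comparison, since it is comparison that guarantees the full group has enough room to decompose an arbitrary element into small, relocatable pieces.
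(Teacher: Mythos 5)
Your first step is correct, and it is essentially the same commutation trick that the paper uses: for $a,b$ supported in $U$ and $g\in N$ with $g(U)\cap U=\emptyset$, the identity $[[g,a],b]=[a^{-1},b]$ (valid because $gag^{-1}$ is supported in $g(U)$) shows $(H_U)'\subseteq N$, and conjugation spreads this to $[\mathcal{G}]'$-translates of $U$. The genuine gap is your second step, the assertion that ``a doubling argument with such an exchange shows $H_U$ is perfect, so in fact $H_U\subseteq N$.'' Finite doubling does not prove this. If $\sigma$ carries $U$ to a disjoint copy (and one can take $\sigma$ to be a $3$-cycle so that it lies in $[\mathcal{G}]'$), then for $\alpha\in H_U$ the commutator $[\alpha,\sigma]=\alpha\cdot(\sigma\alpha^{-1}\sigma^{-1})$ only shows that $\alpha$ agrees \emph{modulo} $N$ with its copy $\sigma\alpha\sigma^{-1}$; iterating produces further differences of copies, never $\alpha$ itself. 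The standard way to finish such an argument, the infinite swindle, needs infinitely many pairwise disjoint equidecomposable copies of $U$, which is impossible whenever $M(\mathcal{G})\neq\emptyset$ (in particular for Cantor minimal systems, the motivating case): by invariance all copies have the same measure, which is bounded below by Lemma \ref{lem:Kerr}(1). In fact, perfectness of $H_U$ is essentially the theorem itself applied to the restriction groupoid $\mathcal{G}\vert_U$, so assuming it is circular. A related defect appears in your final assembly: the exchanges you include among the generators lie in $[\mathcal{G}]$ but are not known to lie in $[\mathcal{G}]'$ (the paper points out that perfectness of $[\mathcal{G}]$ is an open problem), hence certainly not in $N$; and the decomposition into small-support pieces (Proposition \ref{prop:DecompSmall}) produces factors in $[\mathcal{G}]$, not in $[\mathcal{G}]'$, so it does not yield the generation statement you invoke for $[\mathcal{G}]'$.

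The paper circumvents precisely this difficulty by never attempting to place a local subgroup inside $N$. It proves instead: (i) any subgroup normalized by $[\mathcal{G}]'$ is automatically normal in the \emph{whole} group $[\mathcal{G}]$ (Proposition \ref{prop:Normality}, whose engine is Lemma \ref{lem:DecompNormal}, splitting each $\tau\in N$ as $\tau_1\tau_2$ with $\tau_i\in N$ of proper support); and (ii) any nontrivial normal subgroup of $[\mathcal{G}]$ contains $[\mathcal{G}]'$ (Proposition \ref{prop:containment}). For (ii) one applies your step-one trick directly to an arbitrary commutator: after cutting $\alpha$ and $\beta$ into small-support factors (Proposition \ref{prop:DecompSmall}, combined with Lemma \ref{lem:CommutatorProduct} exactly because those factors are only in $[\mathcal{G}]$), one manufactures $\gamma\in N$ with $\gamma(\supp(\alpha))\cap\supp(\beta)=\emptyset$ and concludes $[\alpha,\beta]=[\alpha,\gamma]\beta[\alpha,\gamma]^{-1}\beta^{-1}\in N$; here normality of $N$ in all of $[\mathcal{G}]$, i.e.\ step (i), is what replaces your missing perfectness claim. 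If you want to repair your outline, the statement to aim for is (i), not local perfectness.
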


The notion of comparison already appeared implicitly in the work of Glasner and Weiss \cite{GlasnerWeiss:IJM}. Very roughly, for a dynamical system we say a set is subequivalent to another set if it can be divided into parts such that the parts can be translated into the other set (this type of relations goes back to Hopf \cite{Hopf:TAMS}). Then we say a dynamical system has comparison if the subequivalence is completely determined by the invariant measures. The formal definition of comparison for topological dynamics was introduced by Winter in 2012 and featured prominently in \cite{Kerr:JEMS, KerrSzabo} for its intimate connections with classification of $C^*$-algebras. In \cite{DZ:arXiv} Downarowicz and Zhang proved that for a large class of countable discrete amenable groups, every action on the Cantor set has comparison. Generalizations of the definition of comparison to \'etale groupoids appeared in \cite{ABBL:arXiv} and \cite{Ma:arXiv}. It is known that a minimal second countable \'etale groupoid whose unit space is homeomorphic to the Cantor set has comparison if it is either almost finite or purely infinite (in the sense of Matui \cite{Matui:PLMS, Matui:Crelle}). \footnote{This goes back to Matui's work (see \cite[Lemma 6.7]{Matui:PLMS} and \cite[Proposition 4.11]{Matui:Crelle}) although comparison was not explicitly defined there; see \cite{ABBL:arXiv} and \cite{Ma:arXiv} for generalizations of these results to groupoids that are not necessarily minimal and whose unit spaces are not necessarily totally disconnected.}

\subsection*{Acknowledgements}
I would like to thank Aaron Tikuisis and Thierry Giordano for many helpful comments. This research is supported by NSERC Discovery Grants and the Fields Institute.

\section{Full groups of \'etale groupoids}

For a topological groupoid $\mathcal{G}$, we let $\mathcal{G}^{(0)}$ denote the unit space of $\mathcal{G}$ and let $r$ and $s$ denote the range and source map, respectively. We say $\mathcal{G}$ is \emph{\'etale} if $r$ and $s$ are local homeomorphisms. For general theory of \'etale groupoids we refer the reader to \cite{Renault:Book},  \cite{Renault:Irish}, and \cite{Sims:Notes}. Throughout this article \textbf{every \'etale groupoid is assumed to be locally compact and Hausdorff.  }

Let $\mathcal{G}$ be an \'etale groupoid. A subset $U$ of $\mathcal{G}$ is called a \emph{bisection} (or \emph{$\mathcal{G}$-set}) if $r\vert_U$ and $s\vert_U$ are injective. Since $r$ and $s$ are local homeomorphisms, the topology of $\mathcal{G}$ has a base consisting of open bisections. To every open bisection $U$ we associate a partial homeomorphism
\[
\sigma_U := r\circ (s\vert_U)^{-1}
\]
from $s(U)$ onto $r(U)$. 

For a point $x$ in $\mathcal{G}^{(0)}$, the \emph{orbit} of $x$ is the set $\{ r(g) : g\in \mathcal{G},\; s(g) = x  \}$, that is, the set of points in $\mathcal{G}^{(0)}$ that are translates of $x$ by elements in $\mathcal{G}$. We say  an \'etale groupoid $\mathcal{G}$ is \emph{minimal} if every orbit is dense in $\mathcal{G}^{(0)}$.

\begin{defn} \label{defn:InvariantMeasure}
	Let $\mathcal{G}$ be an \'etale groupoid. A regular Borel probability measure $\mu$ on $\mathcal{G}^{(0)}$ is  \emph{$\mathcal{G}$-invariant} if 
	\[
	\mu( s(E) ) = \mu(r(E))
	\]
	for every Borel set $E$ that is contained in an open bisection. 
	
	We let $M(\mathcal{G})$ denote the set of all $\mathcal{G}$-invariant regular Borel probability measures.
\end{defn}

\begin{remark}
	If $\mathcal{G}^{(0)}$ is metrizable then any finite Borel measure on $\mathcal{G}^{(0)}$ is regular according to \cite[Theorem 17.10]{Kechris:Book}. This happens, for example, if $\mathcal{G}$ is second countable (see below). 
\end{remark}

Recall that if a locally compact Hausdorff space is second countable, then it is separable and metrizable (in fact Polish; see \cite[Theorem 5.3]{Kechris:Book}). Therefore if an \'etale groupoid is second countable, then its topology has a countable base consisting of open bisections, and its unit space can be equipped with a compatible metric.

\begin{lemma} \label{lem:Kerr} Let $\mathcal{G}$ be a minimal, second countable, \'etale groupoid whose unit space is compact and infinite. Let $d$ be a compatible metric on $\mathcal{G}^{(0)}$.
	\begin{enumerate}
		\item[(1)] For every nonempty open subset $B$ of $\mathcal{G}^{(0)}$, there is a constant $\eta > 0$ such that $\mu( B ) \geq \eta$ for all $\mu\in M(\mathcal{G})$.
		\item[(2)] For every $\e > 0$ there is a constant $\delta > 0$ such that whenever $A$ is a subset of $\mathcal{G}^{(0)}$ satisfying $\diam(A) < \delta$, we have $\mu(A) < \e$ for all $\mu\in M(\mathcal{G})$.
	\end{enumerate} 
\end{lemma}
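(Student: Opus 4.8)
The plan is to handle the two parts separately: part (1) by a covering argument that turns minimality into a uniform lower bound, and part (2) by a weak*-compactness argument that reduces the claim to the non-existence of atoms for invariant measures.

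\textbf{Part (1).} For a fixed nonempty open set $B$, the idea is to cover $\mathcal{G}^{(0)}$ by finitely many bisection-images landing in $B$. For each $x\in\mathcal{G}^{(0)}$, minimality makes the orbit of $x$ dense, so it meets the open set $B$; hence there is $g\in\mathcal{G}$ with $s(g)=x$ and $r(g)\in B$. Taking an open bisection $U_x\ni g$ and shrinking it to $U_x\cap r^{-1}(B)$, I may assume $r(U_x)\subseteq B$ while $s(U_x)$ is an open neighborhood of $x$. The family $\{s(U_x)\}_{x}$ is then an open cover, so compactness of $\mathcal{G}^{(0)}$ yields finitely many bisections $U_1,\dots,U_n$ with $\mathcal{G}^{(0)}=\bigcup_{i=1}^n s(U_i)$ and $r(U_i)\subseteq B$. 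For any $\mu\in M(\mathcal{G})$, invariance applied to each Borel set $U_i$ (sitting inside the open bisection $U_i$) gives $\mu(s(U_i))=\mu(r(U_i))\leq\mu(B)$, so
\[
1=\mu(\mathcal{G}^{(0)})\leq\sum_{i=1}^n\mu(s(U_i))\leq n\,\mu(B),
\]
and $\eta:=1/n$ works. (If $M(\mathcal{G})=\emptyset$ the statement is vacuous.)

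\textbf{Part (2).} Here I would argue by contradiction, using that $M(\mathcal{G})$ is a weak*-closed, hence weak*-compact, subset of the probability measures on the compact metrizable space $\mathcal{G}^{(0)}$. If the conclusion failed for some $\e>0$, then for each $n$ there would be a nonempty set $A_n$ with $\diam(A_n)<1/n$ and a measure $\mu_n\in M(\mathcal{G})$ with $\mu_n(A_n)\geq\e$. Picking $x_n\in A_n$ gives $A_n\subseteq\overline{B}(x_n,1/n)$, the closed $d$-ball. Passing to a subsequence, $x_n\to x$ and $\mu_n\to\mu$ weak* for some $\mu\in M(\mathcal{G})$. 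For any fixed $\rho>0$ we have $\overline{B}(x_n,1/n)\subseteq\overline{B}(x,\rho)$ for large $n$, so the portmanteau inequality for the closed ball $\overline{B}(x,\rho)$ yields
\[
\e\leq\limsup_n\mu_n(A_n)\leq\limsup_n\mu_n(\overline{B}(x,\rho))\leq\mu(\overline{B}(x,\rho)).
\]
Letting $\rho\to0$ and using continuity of $\mu$ from above gives $\mu(\{x\})\geq\e>0$, i.e. the invariant measure $\mu$ has an atom.

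It then remains to rule out atoms, which is exactly where infiniteness of $\mathcal{G}^{(0)}$ enters. If $y=r(g)$ with $s(g)=x$, applying invariance to the singleton $\{g\}$ (a Borel subset of any open bisection through $g$) gives $\mu(\{y\})=\mu(r(\{g\}))=\mu(s(\{g\}))=\mu(\{x\})$; thus every point of the orbit of $x$ carries the same positive mass $\mu(\{x\})$. Since $\mathcal{G}^{(0)}$ is Hausdorff and infinite, no finite set is dense, so by minimality the orbit of $x$ is infinite. Choosing $N$ distinct orbit points with $N\mu(\{x\})>1$ then produces a finite set of measure exceeding $1$, contradicting $\mu(\mathcal{G}^{(0)})=1$. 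This contradiction establishes (2).

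The main obstacle is the second part: the crux is using weak*-compactness to extract a single invariant measure that witnesses the concentration of mass, and then propagating that mass along a dense (hence infinite) orbit via invariance to reach a contradiction. The one technical point I would verify carefully is that $M(\mathcal{G})$ is genuinely weak*-closed; this follows by reformulating the invariance condition as a family of identities among integrals of continuous functions transported along bisections, each of which is a weak*-continuous constraint.
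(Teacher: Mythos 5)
Your proof is correct, but it takes a genuinely different route from the paper's, which is essentially a two-line citation: the paper gets part (1) by noting $\mu(B)>0$ for every invariant $\mu$ (by minimality) and then invoking \cite[Lemma 3.3]{Kerr:JEMS} together with weak$^*$ compactness of $M(\mathcal{G})$, and gets part (2) by noting that invariant measures are atomless (orbits are infinite) and invoking \cite[Lemma 9.1]{Kerr:JEMS} pointwise, finishing with a Lebesgue number argument. Your part (1) is instead the classical Glasner--Weiss covering trick: finitely many open bisections $U_1,\dots,U_n$ with $\bigcup_i s(U_i)=\mathcal{G}^{(0)}$ and $r(U_i)\subseteq B$ force $1\leq n\,\mu(B)$, giving the explicit constant $\eta=1/n$; this is completely self-contained and avoids weak$^*$ compactness of $M(\mathcal{G})$ altogether. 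Your part (2) replaces Kerr's lemma by a direct contradiction argument: sequential weak$^*$ compactness produces a limit invariant measure, the portmanteau inequality and continuity from above force it to have an atom, and you then kill the atom by propagating its mass along an infinite orbit via invariance applied to singletons $\{g\}$ --- which is exactly the atomlessness argument the paper leaves implicit. So both routes rest on the same two ingredients (compactness of the set of invariant measures and atomlessness), but you re-prove the needed special cases of Kerr's lemmas from scratch, making the lemma self-contained at the cost of length, whereas the paper is shorter by outsourcing. The one technical point you correctly flag --- that $M(\mathcal{G})$ is weak$^*$ closed, needed so that your limit measure is invariant --- does require rewriting the invariance condition as the integral identities $\int f\,d\mu=\int f\circ\sigma_U^{-1}\,d\mu$ for $f\in C_c(s(U))$ over open bisections $U$ (equivalence with Definition \ref{defn:InvariantMeasure} follows from regularity and the Riesz representation theorem); note that the paper likewise asserts weak$^*$ compactness of $M(\mathcal{G})$ without proof, so your treatment is no less rigorous there.
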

\begin{proof}
	(1). Since $\mathcal{G}$ is minimal and second countable, we have $\mu(B) > 0$ for every $\mu\in M(\mathcal{G})$. The set $M(\mathcal{G})$, when viewed as a subset of the dual of $C(\mathcal{G}^{(0)})$, is compact in the weak$^*$ topology. Therefore the result follows from \cite[Lemma 3.3]{Kerr:JEMS} (with $A$ in the lemma being the empty set).
	
	(2). Let $\e > 0$ be given. Since $\mathcal{G}$ is minimal and $\mathcal{G}^{(0)}$ is infinite, every orbit is infinite. This implies that $\mu(\{x\}) = 0$ for all $x\in \mathcal{G}^{(0)}$ and $\mu\in M(\mathcal{G})$. By \cite[Lemma 9.1]{Kerr:JEMS} for every $x\in \mathcal{G}^{(0)}$ there exists a constant $\delta_x > 0$ such that the open ball $B(x; \delta_x) := \{y\in \mathcal{G}^{(0)}: d(y,x) < \delta_x   \}$ satisfies 
	\[
	\mu( B(x; \delta_x)   ) < \e
	\] 
	for all $\mu\in M(\mathcal{G})$. Since $\mathcal{G}^{(0)}$ is compact and the open balls $B(x; \delta_x)$ cover $\mathcal{G}^{(0)}$, there exists a constant $\delta > 0$ (i.e., a Lebesgue number) such that every subset of $\mathcal{G}^{(0)}$ having diameter less than $\delta$ is contained in some $B(x; \delta_x)$. This concludes the proof.
\end{proof}

We now recall the definition of full groups for \'etale groupoids. We also recall the definition of topological full groups since some lemmas in Section \ref{sec:comparison} actually produce elements in these subgroups.

\begin{defn}  [{\cite[Definition 2.3]{Matui:PLMS}}] \label{defn:FullGroup}
	Let $\mathcal{G}$ be an \'etale groupoid. The \emph{full group} $[\mathcal{G}]$ consists of $\alpha\in \homeo(\mathcal{G}^{(0)})$ such that for every $x\in \mathcal{G}^{(0)}$ there is an element $g\in \mathcal{G}$ satisfying $x = s(g)$ and $\alpha(x) = r(g)$.
	
	The \emph{topological full group} $[[\mathcal{G}]]$ is the subgroup of $[\mathcal{G}]$ consisting of elements $\alpha$ for which there is a compact open bisection $U$ such that $\alpha = \sigma_U$ (in particular $s(U) = r(U) = \mathcal{G}^{(0)}$). 
\end{defn}

As noted in \cite{Matui:PLMS}, for the transformation groupoid $\mathcal{G}_\varphi$ arising from a Cantor minimal system $(X, \varphi)$ (i.e., $\varphi$ is a homeomorphism on the Cantor set $X$), the definitions agree with the ones given in \cite{GPS:Israel}. Given a homeomorphism $\alpha$ on a topological space $X$ and a Borel measure $\mu$ on $X$,  define the measure $\alpha^*\mu$ on $X$ by setting
\[
(\alpha^*\mu)(E) = \mu(\alpha(E))
\]
for every Borel subset $E$ of $X$. For a Cantor minimal system, it is readily seen that any homeomorphism in the full group acts trivially on the set of invariant Borel probability measures. The next proposition extends this fact to any second countable \'etale groupoid.

\begin{prop} \label{prop:MeasurePerserving}
	Let $\mathcal{G}$ be a second countable \'etale groupoid, and let $\alpha \in [\mathcal{G}]$. Then $\alpha^*\mu = \mu$ for every $\mu\in M(\mathcal{G})$.
\end{prop}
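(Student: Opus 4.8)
The plan is to reduce the global invariance $\alpha^*\mu = \mu$ to the local invariance condition of Definition \ref{defn:InvariantMeasure} by decomposing $\mathcal{G}^{(0)}$ into countably many Borel pieces, on each of which $\alpha$ coincides with the partial homeomorphism $\sigma_U$ associated with a single open bisection $U$. Since $\alpha$ is a homeomorphism, $\alpha^*\mu = \mu$ is equivalent to $\mu(\alpha(E)) = \mu(E)$ for every Borel set $E\subseteq \mathcal{G}^{(0)}$, and it is this identity I would establish.

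First I would fix a countable base $\{U_n\}$ of open bisections, which exists because $\mathcal{G}$ is second countable. Given $x\in \mathcal{G}^{(0)}$, Definition \ref{defn:FullGroup} supplies $g\in\mathcal{G}$ with $s(g)=x$ and $r(g)=\alpha(x)$; choosing a base bisection $U_n\ni g$ yields $x\in s(U_n)$ and $\sigma_{U_n}(x)=r(g)=\alpha(x)$. Hence the sets
\[
B_n := \{\, x\in s(U_n) : \alpha(x) = \sigma_{U_n}(x) \,\}
\]
cover $\mathcal{G}^{(0)}$. As $s(U_n)$ is open and both $\alpha$ and $\sigma_{U_n}=r\circ(s\vert_{U_n})^{-1}$ are continuous on it, each $B_n$ is the equalizer of two continuous maps into the Hausdorff space $\mathcal{G}^{(0)}$, hence relatively closed in $s(U_n)$ and therefore Borel. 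Disjointifying by setting $E_n := B_n\setminus(B_1\cup\cdots\cup B_{n-1})$ produces a countable Borel partition $\{E_n\}$ of $\mathcal{G}^{(0)}$ with $E_n\subseteq s(U_n)$ and $\alpha\vert_{E_n}=\sigma_{U_n}\vert_{E_n}$.

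Next I would feed this partition into the invariance hypothesis. Fix a Borel set $E\subseteq\mathcal{G}^{(0)}$. For each $n$ the set $F_n := (s\vert_{U_n})^{-1}(E\cap E_n)$ is a Borel subset of the open bisection $U_n$ (since $s\vert_{U_n}$ is a homeomorphism onto its image), and it satisfies $s(F_n)=E\cap E_n$ and $r(F_n)=\sigma_{U_n}(E\cap E_n)=\alpha(E\cap E_n)$. Because $\mu$ is $\mathcal{G}$-invariant, $\mu(E\cap E_n)=\mu(\alpha(E\cap E_n))$. Since $\alpha$ is a homeomorphism, $\{\alpha(E_n)\}$ is again a Borel partition of $\mathcal{G}^{(0)}$ and $\alpha(E)\cap\alpha(E_n)=\alpha(E\cap E_n)$; summing over $n$ by countable additivity gives
\[
\mu(\alpha(E)) = \sum_n \mu\bigl(\alpha(E\cap E_n)\bigr) = \sum_n \mu(E\cap E_n) = \mu(E),
\]
as desired.

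The only genuine obstacle is conceptual rather than computational: the defining condition for $[\mathcal{G}]$ is purely pointwise, so the agreement $\sigma_{U_n}(x)=\alpha(x)$ need not hold on any neighborhood of $x$, and one cannot simply exhibit an open cover on which $\alpha$ restricts to bisection maps. The remedy is to pass to the Borel equalizer sets $B_n$ and disjointify them; second countability is exactly what keeps this family countable, so that countable additivity of $\mu$ closes the argument. The remaining points—that relatively closed subsets of open sets and homeomorphic images of Borel sets are Borel, and that $(s\vert_{U_n})^{-1}$ preserves Borel sets—are routine and need only be mentioned.
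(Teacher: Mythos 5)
Your proposal is correct and follows essentially the same route as the paper's proof: fix a countable base of open bisections, pass to the (relatively closed, hence Borel) equalizer sets where $\alpha$ agrees with $\sigma_{U_n}$, disjointify them into a countable Borel partition, apply the invariance condition of Definition \ref{defn:InvariantMeasure} to the pullback $(s\vert_{U_n})^{-1}$ of each piece, and conclude by countable additivity. The only difference is notational (the paper indexes the equalizer sets as $s(E_n)$ with $E_n\subseteq U_n$ a set of groupoid elements, whereas you work directly in the unit space), which is immaterial.
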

\begin{proof}
	Let $\{U_1, U_2,...\}$ be a countable base for the topology of $\mathcal{G}$ consisting of open bisections. For each $n\in \N$ define
	\[
	E_n := \{ g\in U_n: \alpha(s(g)) = r(g)  \}.
	\]
	Then $s(E_n)$ is closed in $s(U_n)$ because 
	\[
	s(E_n) = \{ x\in s(U_n) : \sigma_{U_n}(x) = \alpha(x)  \}.
	\]
	Since $\alpha$ belongs to the full group $[\mathcal{G}]$, the collection $\{s(E_n)\}_{n=1}^\infty$ covers the unit space $\mathcal{G}^{(0)}$. We define
	\[
	B_n := s(E_n) \setminus \left( \bigcup_{i=1}^{n-1} s(E_i)   \right)     \qquad (n=1,2,...).
	\]
	Then $\{B_1, B_2,...\}$ forms a partition of $\mathcal{G}^{(0)}$ consisting of Borel subsets. Let us set
	\[
	F_n := (s\vert_{U_n})^{-1}(B_n) \qquad \text{ and } \qquad  C_n := r(F_n)
	\]
	for each $n\in \N$. By definition $\sigma_{U_n}$ and $\alpha$ agree on $s(E_n)$, so they also agree on $B_n$. It follows that $C_n = \alpha(B_n)$ and hence $\{C_1, C_2,...\}$ is also a Borel partition of $\mathcal{G}^{(0)}$.  Then for every Borel set $A\subseteq \mathcal{G}^{(0)}$ and $\mu\in M(\mathcal{G})$,
	\begin{align*}
	\mu(A) &= \mu\left(  \bigsqcup_{n=1}^\infty[ A\cap B_n ]   \right) = \sum_{n=1}^\infty \mu(A\cap B_n  ) \\
	&= \sum_{n=1}^\infty \mu(  \sigma_{U_n}(A\cap B_n)     ) = \sum_{n=1}^\infty \mu(  \alpha(A\cap B_n)  ) \\
	&= \sum_{n=1}^\infty \mu( \alpha(A)\cap C_n   ) = \mu\left(  \bigsqcup_{n=1}^\infty [\alpha(A)\cap C_n]  \right) \\
	&= \mu( \alpha(A)  ).
	\end{align*}
	Therefore $\alpha^*\mu = \mu$. 
\end{proof}

Looking at Definition \ref{defn:FullGroup}, one quickly realizes that if the unit space is connected then the topological full group does not carry much information about the dynamics. Although less obvious, already for Cantor minimal systems the same goes for the full group (see \cite[Proposition 1.3]{GPS:Israel}). Therefore we will mostly be working with groupoids whose unit spaces are totally disconnected.\footnote{Here by totally disconnected we mean that the only connected subspaces are one-point sets. A locally compact Hausdorff space is totally disconnected if and only if its topology has a base consisting of clopen sets (see, for example, \cite[II.4, Proposition D]{HW:Book}). An \'etale groupoid whose unit space is totally connected is often called \emph{ample}.}  In this case the topology of the groupoid has a base consisting of compact open bisections (\cite[Proposition 4.1]{Exel:PAMS}). 

Before closing the section, we recall the definition of the support of a homeomorphism, which plays an important role in studying algebraic properties of full groups and topological full groups.

\begin{defn}
	Let $X$ be a topological space and $\alpha\in \homeo(X)$. The \emph{support} of $\alpha$ is defined by
	\[
	\supp(\alpha) :=  \overline{ \{  x\in X: \alpha(x) \neq x  \}},
	\]
	where $\overline{A}$ denotes the closure of $A$.
\end{defn}

The following simple observation will be used many times.

\begin{lemma}
	Let $\alpha$ and $\beta$ be two homeomorphisms on a topological space $X$. Then 
	\[
	\supp(\beta\alpha\beta^{-1})    = \beta(  \supp(\alpha)).
	\]
\end{lemma}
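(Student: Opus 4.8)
The plan is to reduce the statement about supports (which are closures) to a cleaner statement about the bare sets of moved points, and only take closures at the very end. Write $M_\alpha := \{x\in X : \alpha(x)\neq x\}$ for the set of points actually moved by $\alpha$, so that by definition $\supp(\alpha) = \overline{M_\alpha}$. I would isolate two ingredients: (i) conjugation by $\beta$ transports moved-point sets by $\beta$, i.e. $M_{\beta\alpha\beta^{-1}} = \beta(M_\alpha)$; and (ii) a homeomorphism commutes with the closure operation.

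First I would verify (i) by a direct pointwise argument. For $y\in X$, the point $y$ is moved by $\beta\alpha\beta^{-1}$ precisely when $\beta(\alpha(\beta^{-1}(y))) \neq y$; since $\beta$ is injective, applying $\beta^{-1}$ to both sides shows this is equivalent to $\alpha(\beta^{-1}(y)) \neq \beta^{-1}(y)$, i.e. to $\beta^{-1}(y)\in M_\alpha$, i.e. to $y\in \beta(M_\alpha)$. Hence $M_{\beta\alpha\beta^{-1}} = \beta(M_\alpha)$.

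Next I would record (ii): for any homeomorphism $\beta$ and any subset $A\subseteq X$ one has $\beta(\overline{A}) = \overline{\beta(A)}$. Continuity of $\beta$ gives $\beta(\overline{A})\subseteq \overline{\beta(A)}$; applying the same inclusion to the continuous map $\beta^{-1}$ and the set $\beta(A)$ gives $\beta^{-1}(\overline{\beta(A)})\subseteq \overline{A}$, which rearranges to the reverse inclusion $\overline{\beta(A)}\subseteq \beta(\overline{A})$. Combining (i) and (ii) with $A = M_\alpha$ then yields
\[
\supp(\beta\alpha\beta^{-1}) = \overline{M_{\beta\alpha\beta^{-1}}} = \overline{\beta(M_\alpha)} = \beta(\overline{M_\alpha}) = \beta(\supp(\alpha)),
\]
as desired.

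There is no substantive obstacle here, as the result is elementary. The only point deserving genuine care is (ii): the interplay between closure and image really does require that $\beta$ be a homeomorphism rather than merely continuous, since the reverse inclusion $\overline{\beta(A)}\subseteq \beta(\overline{A})$ fails for general continuous maps and is exactly where continuity of $\beta^{-1}$ enters. Once that is in hand, everything else is a one-line pointwise check together with the definition of the support.
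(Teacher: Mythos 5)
Your proof is correct and follows essentially the same route as the paper: identify the set of moved points, check pointwise that conjugation by $\beta$ transports this set to its image under $\beta$, and then use the fact that a homeomorphism commutes with closure. The paper's proof is a condensed version of exactly this argument (it states the pointwise equivalence and the identity $\overline{\beta(A)} = \beta(\overline{A})$ without elaboration), so your write-up simply makes the same steps explicit.
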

\begin{proof}
	Let $A = \{ x\in X: \alpha(x) \neq x  \}$. Then $\beta \alpha \beta^{-1}(x)\neq x$ if and only if $x\in \beta(A)$. Therefore 
	\[
	\supp( \beta \alpha \beta^{-1} ) = \overline{\beta(A)} = \beta(\overline A) = \beta( \supp(\alpha) ).
	\]
\end{proof}

\section{Comparison and full groups} \label{sec:comparison}

We call a topological space a \emph{Cantor space} if it is homeomorphic to the Cantor set. As mentioned in the introduction, the formal definition of comparison for \'etale groupoids appeared in \cite{ABBL:arXiv} and \cite{Ma:arXiv}.\footnote{In \cite{ABBL:arXiv} the groupoids were assumed to have totally disconnected unit spaces.} The following definition is \cite[Definition 2.1]{ABBL:arXiv} specialized to minimal groupoids whose unit spaces are Cantor spaces (see \cite[Remark 2.2]{ABBL:arXiv}).

\begin{defn}  \label{defn:Comparison}
	Let $\mathcal{G}$ be a minimal \'etale groupoid whose unit space is a Cantor space. We say $\mathcal{G}$ has \emph{comparison} if for any clopen subsets $A$, $B$ of $\mathcal{G}^{(0)}$ such that $B\neq \emptyset$ and $\mu(A) < \mu(B)$ for all $\mu\in M(\mathcal{G})$ (with $M(\mathcal{G})$ possibly being empty), there is a compact open bisection $U$ such that $s(U) = A$ and $r(U) \subseteq B$. 
\end{defn}

When $M(\mathcal{G})=\emptyset$, comparison simply means that for any clopen subsets $A$, $B$ of $\mathcal{G}^{(0)}$ with $B\neq \emptyset$ there is a compact open bisection $U$ such that $s(U) = A$ and $r(U) \subseteq B$.

\begin{example}
	Let $\mathcal{G}$ be a minimal second countable \'etale groupoid whose unit space is a Cantor space. 
	\begin{enumerate}
		\item[(1)] If $\mathcal{G}$ is purely infinite in the sense of \cite[Definition 4.9]{Matui:Crelle}, then $\mathcal{G}$ has comparison by \cite[Proposition 4.11]{Matui:Crelle}.\footnote{In fact pure infiniteness is equivalent to comparison when $M(\mathcal{G}) = \emptyset$; see \cite[Theorem 5.1]{Ma:arXiv}.}  More concrete examples include \'etale groupoids arising from shifts of finite-type, boundary actions of free groups, and $n$-filling actions in the sense of \cite{JR:JFA}. We refer the reader to \cite{Matui:Crelle} for more examples and discussions. (See also \cite{Ma:arXiv} for generalization to groupoids whose unit spaces are not necessarily totally disconnected.) 
		\item[(2)] If $\mathcal{G}$ is almost finite in the sense of \cite[Definition 6.2]{Matui:PLMS}, then it follows from \cite[Lemma 6.7]{Matui:PLMS} that  $\mathcal{G}$ has comparison (see also \cite[Lemma 3.14]{ABBL:arXiv}). In \cite{DZ:arXiv} it was shown that if $G$ is finitely generated and has locally subexponential growth (for example $\Z^n$) then every free action of $G$ on a Cantor space is almost finite.\footnote{It was actually proved directly that the action has comparison.} For more on almost finite groupoids and almost finite actions, we refer the reader to \cite{ABBL:arXiv, Kerr:JEMS, KerrSzabo, Matui:PLMS, Suzuki:IMRN}.
	\end{enumerate}	
\end{example}

In what follows we present some results on comparing clopen sets by elements in (topological) full groups. Most of the arguments are already contained in \cite{GlasnerWeiss:IJM}, \cite{BM:Colloq}, and \cite{Matui:Crelle}. However, since we have different assumptions and need a few refinements for the next section, we include full details.  

\begin{lemma} \label{lem:ComparisonByFullGroup}
	Let $\mathcal{G}$ be a minimal \'etale groupoid whose unit space is a Cantor space. Suppose $\mathcal{G}$ has comparison. Then for any clopen subsets $A$, $B$ of $\mathcal{G}^{(0)}$ such that $A\neq \mathcal{G}^{(0)}$, $B\neq \emptyset$, and $\mu(A) < \mu(B)$ for all $\mu\in M(\mathcal{G})$ (with $M(\mathcal{G})$ possibly being empty), there is a homeomorphism $\alpha\in [[\mathcal{G}]]$ such that $\alpha(A)\subseteq B$. Moreover, 
	\begin{enumerate}
		\item[(1)] if $B\setminus A$ is nonempty then we may arrange that $\alpha^2 = \id$, and $\supp(\alpha) \subseteq A\cup \alpha(A)$;
		\item[(2)] if  $B\subseteq A$ (which can only happen when $M(\mathcal{G}) = \emptyset$) then we may arrange that $A\cup \supp(\alpha) \neq \mathcal{G}^{(0)}$.
	\end{enumerate}
	
\end{lemma}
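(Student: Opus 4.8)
The plan is to reduce everything to the definition of comparison by assembling $\alpha$ from compact open bisections, treating the two alternatives separately and observing that they are exhaustive. First note the dichotomy: either $B \setminus A \neq \emptyset$, or $B \subseteq A$; in the latter situation, if some $\mu \in M(\mathcal{G})$ existed we would obtain $\mu(B) \le \mu(A) < \mu(B)$, so necessarily $M(\mathcal{G}) = \emptyset$. These two cases are precisely (1) and (2), and since $\alpha(A) \subseteq B$ is asserted in both, the unqualified statement will follow at once from the two refinements.

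For (1) the key idea is to apply comparison not to the pair $(A,B)$ but to $A' := A \setminus B$ and $B' := B \setminus A$. Both are clopen, $B' \neq \emptyset$ by hypothesis, and for every $\mu \in M(\mathcal{G})$ one has $\mu(A') < \mu(B')$, since subtracting $\mu(A \cap B)$ from both sides of $\mu(A) < \mu(B)$ yields exactly this. Comparison then produces a compact open bisection $U$ with $s(U) = A'$ and $r(U) \subseteq B'$. As $r(U) \subseteq B \setminus A$ is disjoint from $A$, and in particular from $A' \subseteq A$, I would define $\alpha$ to be $\sigma_U$ on $A'$, $\sigma_U^{-1}$ on $r(U)$, and the identity on $\mathcal{G}^{(0)} \setminus (A' \cup r(U))$; this is realized by the compact open bisection $U \sqcup U^{-1} \sqcup (\mathcal{G}^{(0)} \setminus (A' \cup r(U)))$, so $\alpha \in [[\mathcal{G}]]$, and by construction $\alpha^2 = \id$ with $\supp(\alpha) \subseteq A' \cup r(U) \subseteq A \cup \alpha(A)$. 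Finally, since $r(U)$ avoids $A$, the set $A \cap B$ is fixed by $\alpha$, whence $\alpha(A) = (A \cap B) \sqcup r(U) \subseteq B$. The only real point here is the replacement of $(A,B)$ by $(A \setminus B,\, B \setminus A)$, which forces the image to avoid $A$ and makes the swap both well defined and contained in $B$.

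For (2) we have $M(\mathcal{G}) = \emptyset$, so comparison is unconstrained by measures: any clopen set is subequivalent to any nonempty clopen set. Since $A \neq \mathcal{G}^{(0)}$, I would fix a nonempty clopen $W \subsetneq \mathcal{G}^{(0)} \setminus A$ (possible as the complement is a nonempty Cantor clopen set), and split $B = B_0 \sqcup B_1$ into nonempty clopen pieces. Comparison provides a compact open bisection $U$ with $s(U) = A$ and $r(U) \subseteq B_0 \subseteq B$; note $A \setminus r(U) \supseteq B_1 \neq \emptyset$. To upgrade $\sigma_U$ to an element of $[[\mathcal{G}]]$ fixing $W$, I must map $P := (\mathcal{G}^{(0)} \setminus A) \setminus W$ homeomorphically onto $P \cup (A \setminus r(U))$; both are nonempty clopen, and since $M(\mathcal{G}) = \emptyset$ (equivalently $\mathcal{G}$ is purely infinite) any two nonempty clopen sets are equivalent, so a compact open bisection $V$ with $s(V) = P$ and $r(V) = P \cup (A \setminus r(U))$ exists. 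Assembling $\sigma_U$ on $A$, $\sigma_V$ on $P$, and the identity on $W$ yields a compact open bisection with full source and range, hence an $\alpha \in [[\mathcal{G}]]$ with $\alpha(A) = r(U) \subseteq B$ and $\alpha|_W = \id$; as $W$ is disjoint from $A$ and from $\supp(\alpha)$, we get $A \cup \supp(\alpha) \neq \mathcal{G}^{(0)}$.

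The step I expect to be the main obstacle is the exact surjectivity needed in (2): comparison only yields one-sided subequivalences, whereas the extension over the complement requires a bisection $V$ that is \emph{onto} $P \cup (A \setminus r(U))$, i.e.\ a genuine equivalence of clopen sets. Deriving this from comparison is a Cantor--Schr\"oder--Bernstein type argument (in the $M(\mathcal{G}) = \emptyset$ case it is the assertion that all nonempty clopen sets are equivalent, reflecting pure infiniteness), and the care there lies in keeping the sets compact open throughout the back-and-forth rather than merely open. By contrast, case (1) is essentially bookkeeping once the pair $(A \setminus B,\, B \setminus A)$ is used in place of $(A,B)$.
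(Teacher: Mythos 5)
Your case (1) is correct and is essentially the paper's own argument: pass to $A' = A\setminus B$ and $B' = B\setminus A$, apply comparison to this pair, and build the involution that swaps $A'$ with $r(U)$ and is the identity elsewhere. (The paper handles the degenerate case $A\subseteq B$ by taking $\alpha=\id$; your construction absorbs it implicitly.)

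Case (2), however, contains a genuine gap, and it is exactly the step you flagged as ``the main obstacle.'' You need a compact open bisection $V$ with $s(V) = P$ and $r(V) = P\cup (A\setminus r(U))$ \emph{exactly}, and you justify this by asserting that when $M(\mathcal{G})=\emptyset$ any two nonempty clopen sets are equivalent. That assertion is false, and no Cantor--Schr\"oder--Bernstein argument can repair it, because exact bisection equivalence of clopen sets is obstructed by $K_0$ of the reduced groupoid $C^*$-algebra (equivalently by $H_0(\mathcal{G})$): if $V$ is a compact open bisection with $s(V)=E$ and $r(V)=F$, then $1_V\in C_c(\mathcal{G})$ satisfies $1_V^*1_V = 1_E$ and $1_V 1_V^* = 1_F$, so $1_E$ and $1_F$ are Murray--von Neumann equivalent projections. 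For a concrete counterexample, take the Deaconu--Renault groupoid of the full one-sided shift on three symbols (the Cuntz groupoid for $\mathcal{O}_3$): it is minimal, second countable, purely infinite with $M(\mathcal{G})=\emptyset$, hence has comparison; yet $K_0(\mathcal{O}_3)\cong \Z/2$, each cylinder set $Z_i$ is equivalent to $\mathcal{G}^{(0)}$, so $[1_{Z_1\sqcup Z_2}] = 2[1] = 0 \neq [1] = [1_{\mathcal{G}^{(0)}}]$, and the nonempty clopen sets $Z_1\sqcup Z_2$ and $\mathcal{G}^{(0)}$ are subequivalent to each other but \emph{not} equivalent. So the issue is not, as you suggest, the care needed to keep sets clopen during a back-and-forth argument: the statement you want to derive is simply false in general, and your tiling $\mathcal{G}^{(0)} = A\sqcup P\sqcup W \to r(U)\sqcup r(V)\sqcup W$ cannot be completed for an arbitrary $U$ produced by comparison.

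The paper's proof of (2) sidesteps exact ranges entirely by composing two \emph{involutions} supplied by part (1): fix a nonempty clopen $C\subsetneq \mathcal{G}^{(0)}\setminus A$ (playing the role of your $W$), take an involution $\alpha_2$ swapping $A$ with a subset of $(\mathcal{G}^{(0)}\setminus A)\setminus C$, and an involution $\alpha_1$ swapping $(\mathcal{G}^{(0)}\setminus A)\setminus C$ with a subset of $B$. A swap of two disjoint clopen sets is automatically a homeomorphism of all of $\mathcal{G}^{(0)}$ (identity off their union), so one never needs a bisection with prescribed range; then $\alpha := \alpha_1\alpha_2$ maps $A$ into $B$ and $\supp(\alpha)$ is disjoint from $C$, giving $A\cup\supp(\alpha)\neq \mathcal{G}^{(0)}$. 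That composition-of-involutions trick is what your proposal is missing.
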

\begin{proof}
	We first assume that $B' := B\setminus A$ is nonempty. We also assume that $A' := A\setminus B$ is nonempty, otherwise we can simply take the identity map. Since $\mathcal{G}$ has comparison, regardless of whether $M(\mathcal{G})$ is empty or not there is a compact open bisection $U$ such that $s(U) = A'$ and $r(U) \subseteq B'$. We define the map $\alpha:X\to X$ by 
	\[
	\alpha(x) := \begin{cases} \sigma_U(x) &  \text{ if } x \in A'; \\
	\sigma_U^{-1}(x) & \text{ if } x\in r(U); \\
	x & \text{ otherwise}.  
	\end{cases}
	\]  
	Then $\alpha$ belongs to $[[\mathcal{G}]]$, $\alpha^2 = \id$, and $\supp(\alpha)\subseteq A\cup \alpha(A)$. 
	
	Now suppose  $B$ is contained in $A$ (which implies that $M(\mathcal{G}) = \emptyset$). Since $A\neq \mathcal{G}^{(0)}$, by the previous paragraph we can find $\alpha_1, \alpha_2 \in [[\mathcal{G}]]$ such that $\alpha_1( \mathcal{G}^{(0)} \setminus A  ) \subseteq B$ and $\alpha_2(A) \subseteq \mathcal{G}^{(0)}\setminus A$.  Then the composition $\alpha := \alpha_1\alpha_2$ maps $A$ into $B$. To ensure that $A \cup \supp(\alpha) \neq \mathcal{G}^{(0)}$ we can choose beforehand a nonempty clopen subset $C$ of $\mathcal{G}^{(0)}\setminus A$ such that $C \neq \mathcal{G}^{(0)}\setminus A$, and apply the same construction within the complement of $C$. This ensures that $C$ is disjoint from the support of $\alpha$. 
\end{proof}

\begin{lemma} \label{lem:ComparisonByCommutator}
	Let $\mathcal{G}$ be a minimal, second countable, \'etale groupoid whose unit space is a Cantor space. Suppose $\mathcal{G}$ has comparison. Then for any clopen subsets $A$, $B$ of $\mathcal{G}^{(0)}$ such that $A\neq \mathcal{G}^{(0)}$, $B\neq \emptyset$, and $3\mu(A) < \mu(B)$ for all $\mu\in M(\mathcal{G})$ (with $M(\mathcal{G})$ possibly being empty), there is a homeomorphism $\gamma\in [[\mathcal{G}]]'$ such that $\gamma(A)\subseteq B$. Moreover,
	\begin{enumerate}
		\item[(1)] if $B\setminus A$ is nonempty then we may arrange that $\gamma^2(A)\subseteq B$ and  $\supp(\gamma) \subseteq A\cup \gamma(A)\cup \gamma^2(A)$;
		\item[(2)] if  $B\subseteq A$ (which can only happen when $M(\mathcal{G}) = \emptyset$) then we may arrange that $A\cup \supp(\gamma) \neq \mathcal{G}^{(0)}$.
	\end{enumerate}
\end{lemma}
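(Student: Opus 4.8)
The plan is to exploit the classical fact that a cyclic permutation of three pairwise disjoint clopen sets is a single commutator in the topological full group, and that the hypothesis $3\mu(A) < \mu(B)$ supplies exactly the room needed to place two disjoint homeomorphic copies of $A$ inside $B$. Concretely, suppose we are given pairwise disjoint clopen sets $D_0 = A$, $D_1$, $D_2$ together with compact open bisections $U$, $V$ satisfying $s(U) = D_0$, $r(U) = D_1$, $s(V) = D_1$, $r(V) = D_2$. Let $\sigma, \tau \in [[\mathcal{G}]]$ be the involutions that transpose $D_0 \leftrightarrow D_1$ (using $U$) and $D_1 \leftrightarrow D_2$ (using $V$), acting as the identity elsewhere. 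Using $\sigma^2 = \tau^2 = \id$, a direct computation shows that $\gamma := \sigma\tau\sigma^{-1}\tau^{-1}$ is the $3$-cycle $D_0 \to D_2 \to D_1 \to D_0$. In particular $\gamma \in [[\mathcal{G}]]'$ (being an honest commutator of elements of $[[\mathcal{G}]]$), $\gamma(A) = D_2$, $\gamma^2(A) = D_1$, and $\supp(\gamma) = D_0 \cup D_1 \cup D_2$. Thus the whole problem reduces to producing $D_1, D_2$ in the correct location.

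First I would treat the case $B \setminus A \neq \emptyset$ (equivalently $B \not\subseteq A$), which will yield (1); the trivial case $A = \emptyset$ is settled by $\gamma = \id$. I want $D_1, D_2 \subseteq B$ that are pairwise disjoint and disjoint from $A$. Using comparison (Definition \ref{defn:Comparison}) I first map $A$ into $B \setminus A$: when $M(\mathcal{G}) \neq \emptyset$ this is legitimate since $\mu(B \setminus A) \ge \mu(B) - \mu(A) > 2\mu(A) > \mu(A)$, while if $M(\mathcal{G}) = \emptyset$ comparison applies directly to the nonempty clopen set $B \setminus A$. This produces $U$ with $s(U) = A$ and $D_1 := r(U) \subseteq B \setminus A$. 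Next I map $D_1$ into $B \setminus (A \cup D_1)$: in the measured case the estimate $\mu(B \setminus (A \cup D_1)) \ge \mu(B) - 2\mu(A) > \mu(A) = \mu(D_1)$ — the one place the full strength $3\mu(A) < \mu(B)$ is used — and in the measureless case I split $B \setminus A$ into two nonempty clopen pieces beforehand and place $D_1, D_2$ in separate pieces. This gives $V$ with $s(V) = D_1$ and $D_2 := r(V) \subseteq B \setminus (A \cup D_1)$. Feeding $D_0 = A, D_1, D_2$ into the construction above produces $\gamma \in [[\mathcal{G}]]'$ with $\gamma(A), \gamma^2(A) \subseteq B$ and $\supp(\gamma) \subseteq A \cup \gamma(A) \cup \gamma^2(A)$, which is exactly (1), and in particular the main assertion.

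For the remaining case $B \subseteq A$ — which forces $M(\mathcal{G}) = \emptyset$, since otherwise Lemma \ref{lem:Kerr}(1) would give $3\mu(A) < \mu(B) \le \mu(A)$ — I would reduce to the previous case while controlling the support so as to obtain (2). Since $A \neq \mathcal{G}^{(0)}$, the complement $\mathcal{G}^{(0)} \setminus A$ is a nonempty clopen subset of a Cantor space and hence splits, so I can reserve a nonempty clopen set $C \subsetneq \mathcal{G}^{(0)} \setminus A$ and carry out everything inside $Y := \mathcal{G}^{(0)} \setminus C$, guaranteeing $A \cup \supp(\gamma) \subseteq Y \neq \mathcal{G}^{(0)}$. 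Working in $Y$, I apply the first case twice: a commutator $\gamma_a$ moving $A$ into the nonempty clopen set $(\mathcal{G}^{(0)} \setminus A) \setminus C$, followed by a commutator $\gamma_b$ moving $\gamma_a(A)$ (now disjoint from $A \supseteq B$, so that $B \setminus \gamma_a(A) = B \neq \emptyset$) into $B$. Then $\gamma := \gamma_b\gamma_a \in [[\mathcal{G}]]'$ satisfies $\gamma(A) \subseteq B$, and since both $\supp(\gamma_a)$ and $\supp(\gamma_b)$ are confined to $Y$, so is $\supp(\gamma)$, giving (2).

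The conceptual heart of the argument is the commutator identity realizing a $3$-cycle, together with the counting that the coefficient $3$ provides precisely enough measure to fit the two images $\gamma(A)$ and $\gamma^2(A)$ disjointly inside $B$. I expect the main bookkeeping obstacle to be verifying the disjointness and the measure inequalities simultaneously in the measured case, and separately checking that the measureless case admits enough room via splitting into nonempty clopen pieces. Care is also needed throughout to ensure that every bisection invoked has its source and range confined to the prescribed sets, so that the support bounds claimed in (1) and (2) hold exactly as stated.
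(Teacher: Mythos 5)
Your proof is correct, and in the heart of the lemma, case (1), it is essentially the paper's argument: both realize $\gamma$ as a commutator of two involutions supplied by comparison, so that $\gamma$ is a $3$-cycle on three pairwise disjoint clopen sets, two of which sit inside $B$, with the constant $3$ used exactly to make room for the second copy. The differences there are cosmetic: the paper's involutions form a ``star'' (each transposes $A$ with a subset of $B$, giving $\gamma = [\alpha,\beta] = \beta\alpha$, which cycles $A \to \alpha(A) \to \beta(A) \to A$), whereas yours form a ``chain'' $A \leftrightarrow D_1$, $D_1 \leftrightarrow D_2$; and the paper first reduces to $A\cap B = \emptyset$ by letting $\gamma$ fix $A \cap B$ (weakening $3\mu(A)<\mu(B)$ to $2\mu(A)<\mu(B)$), while you subtract measures directly, which makes you rely on the identity $\mu(D_1)=\mu(r(U))=\mu(s(U))=\mu(A)$ — true by invariance (Definition \ref{defn:InvariantMeasure}), but worth stating, as is the nonemptiness of $B\setminus(A\cup D_1)$, which in the measured case follows from $\mu(B\setminus(A\cup D_1))>\mu(A)>0$ via Lemma \ref{lem:Kerr}(1). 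Where you genuinely diverge is case (2): the paper produces a single commutator $\gamma=[\alpha,\beta]$ from two elements of $[[\mathcal{G}]]$ given by Lemma \ref{lem:ComparisonByFullGroup}(2), exploiting that $\beta$ pushes $\supp(\alpha)$ off a proper clopen set $A' \supseteq A\cup\supp(\alpha)$, so that $\beta\alpha^{-1}\beta^{-1}$ fixes $A$ pointwise and hence $[\alpha,\beta](A)=\alpha(A)\subseteq B$; you instead reserve a clopen set $C$ and compose two outputs of your own case (1) inside $\mathcal{G}^{(0)}\setminus C$, landing in $[[\mathcal{G}]]'$ as a product of two commutators rather than a single one. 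Both routes are valid, since the lemma only requires membership in the commutator subgroup: the paper's version has the marginal extra virtue of exhibiting $\gamma$ as one commutator (and parallels its Lemma \ref{lem:ComparisonByFullGroup}(2) support trick, reused later in Lemma \ref{lem:DecompNormal}), while yours is more self-contained, reducing case (2) to case (1) with no new mechanism.
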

\begin{proof}
	First assume that $B\setminus A$ is nonempty. By requiring the homeomorphism to act trivially on $A\cap B$, we may assume that $A\cap B = \emptyset$ provided we weaken the hypothesis $3\mu(A) < \mu(B)$ to $2\mu(A) < \mu(B)$ for all $\mu\in M(\mathcal{G})$ (with $M(\mathcal{G})$ possibly being empty). By Lemma \ref{lem:ComparisonByFullGroup}(1) there is an involution $\alpha\in [[\mathcal{G}]]$ such that $\alpha(A) \subseteq B$ and $\supp(\alpha) \subseteq A\cup \alpha(A)$. Note that we may assume $B\setminus \alpha(A)$ is nonempty (this is automatic if $M(\mathcal{G})\neq \emptyset$; if $M(\mathcal{G})$ is empty we can choose a proper nonempty clopen subset $B_0$ of $B$ and apply Lemma \ref{lem:ComparisonByFullGroup}(1) to $A$ and $B_0$). Using Lemma \ref{lem:ComparisonByFullGroup}(1) again we obtain an involution $\beta\in [[\mathcal{G}]]$ such that $\beta(A) \subseteq B\setminus \alpha(A)$ and $\supp(\beta) \subseteq A\cup \beta(A)$. We set 
	\[
	\gamma := [\alpha, \beta].
	\] 
	It is straightforward to check that  
	\[
	\gamma = \alpha\beta \alpha^{-1}\beta^{-1} = \beta\alpha
	\]
	(in effect $\gamma$ cyclically permutes the sets $A$, $\alpha(A)$, and $\beta(A)$). Then $\gamma\in [[\mathcal{G}]]'$ satisfies all the requirements. 
	
	Now suppose $B$ is contained in $A$. By Lemma \ref{lem:ComparisonByFullGroup}(2) there exists a homeomorphism $\alpha \in [[\mathcal{G}]]$ such that $\alpha(A)\subseteq B$ and $A\cup \supp(\alpha) \neq \mathcal{G}^{(0)}$. Let $A'$ be a clopen subset such that $A\cup \supp(\alpha) \subseteq A' \neq \mathcal{G}^{(0)}$. Using Lemma \ref{lem:ComparisonByFullGroup}(2) again we can find a homeomorphism $\beta\in [[\mathcal{G}]]$ such that $\beta(A') \subseteq \mathcal{G}^{(0)}\setminus A'$ and $A'\cup \supp(\beta) \neq \mathcal{G}^{(0)}$. Since $\supp(\beta\alpha^{-1}\beta^{-1}) = \beta (\supp(\alpha))$ and the latter is disjoint from $A$, we see that 
	\[
	\alpha\beta\alpha^{-1}\beta^{-1}(A) = \alpha(A)\subseteq B.
	\]
	Therefore $\gamma := [\alpha, \beta]\in [[\mathcal{G}]]'$ maps $A$ into $B$. Moreover,
	\[
	A\cup \supp(\gamma )  \subseteq A\cup\supp(\alpha)\cup \supp(\beta) \subseteq  A'\cup \supp(\beta) \neq \mathcal{G}^{(0)}.
	\]
\end{proof}

\begin{lemma} [{cf.\ \cite[Proposition 2.6]{GlasnerWeiss:IJM}}]\label{lem:GlasnerWeissIntertwining}
	Let $\mathcal{G}$ be a minimal, second countable, \'etale groupoid whose unit space is a Cantor space. Suppose $\mathcal{G}$ has comparison. Then for any clopen sets $A, B$ of $\mathcal{G}^{(0)}$ such that $A\setminus B \neq \emptyset$, $B\setminus A \neq \emptyset$, and $\mu(A) = \mu(B)$ for all $\mu\in M(\mathcal{G})$ (with $M(\mathcal{G})$ possibly being empty), there is a homeomorphism $\alpha\in [\mathcal{G}]$ such that $\alpha(A) = B$,  $\alpha^2 = \id$, and $\supp(\alpha) \subseteq A\cup B$.  
\end{lemma}
\begin{proof}
	We first consider the case when $M(\mathcal{G})\neq \emptyset$, as the other case follows from a similar (but easier) argument. By setting $\alpha$ to be the identity on $A\cap B$, we may assume that $A$ and $B$ are disjoint. Since $\mathcal{G}$ is minimal there is an element $g\in \mathcal{G}$ such that $x_0 := s(g) \in A$ and $y_0 := r(g) \in B$. Let $d$ be a compatible metric on $\mathcal{G}^{(0)}$, and let $A_1\subseteq A$ be a clopen neighborhood of $x_0$ such that $\diam(A_1) < \frac{1}{2}$ and $\mu(A_1) < \frac{1}{2}\mu(A)$ for all $\mu\in M(\mathcal{G})$ (see Lemma \ref{lem:Kerr}). Set $A_1' := A\setminus A_1$. Using Lemma \ref{lem:ComparisonByFullGroup}(1) we can find an involution $\alpha_1\in [[\mathcal{G}]]$ such that $\alpha_1(A_1') \subseteq B$ and $\supp(\alpha_1)\subseteq A_1'\cup \alpha_1(A_1')$. Note that by choosing a clopen neighborhood of $y_0$ with sufficiently small measures, we may arrange that $\alpha_1(A\setminus A_1)$ does not contain $y_0$. Set $B_1' := \alpha_1(A_1')$ and
	\[
	B_1 := B \setminus B_1'.
	\]
	Then $\mu(B_1) = \mu(A_1)$ for all $\mu\in M(\mathcal{G})$. Now we apply the preceding argument to $B_1$ and $A_1$. Let $B_2\subseteq B_1$ be a clopen neighborhood of $y_0$ such that $\diam(B_2) < \frac{1}{4}$ and  $\mu(B_2) < \frac{1}{2}\mu(B_1)$ for all $\mu\in M(\mathcal{G})$. We set $B_2' := B_1\setminus B_2$. Then by Lemma \ref{lem:ComparisonByFullGroup}(1) there is an involution $\beta_2\in [[\mathcal{G}]]$ such that $\beta_2(B_2'  ) \subseteq A_2$ and $\supp(\beta_2) \subseteq B_2'\cup \beta_2(B_2')$. As before we may assume that $A_2' := \beta_2(B_2')$ does not contain the point $x_0$. For convenience we set $\alpha_2 := \beta_2^{-1}$. 
	
	Let $A_0 = A$ and $B_0 = B$. By repeating the argument in the previous paragraph we obtain two decreasing sequences of clopen neighborhoods $(A_n)$ and $(B_n)$ of $x_0$ and $y_0$, respectively, and a sequence of involutions $(\alpha_n)$ in $[[\mathcal{G}]]$ such that 
	\begin{enumerate}
		\item $\diam(A_n) < \frac{1}{2^{n-1}}$, $\diam(B_n) < \frac{1}{2^{n-1}}$,
		\item $\alpha_n( A_{n-1}\setminus A_n  ) = B_{n-1}\setminus B_n$, and
		\item $\supp(\alpha_n) \subseteq  (A_{n-1}\setminus A_n)\cup (B_{n-1}\setminus B_n)$
	\end{enumerate}
	for all $n\in \N$. Define a map $\alpha:X\to X$ by
	\[
	\alpha(x) :=
	\begin{cases}
	y_0 & \text{ if } x = x_0; \\
	\alpha_n(x) & \text{ if } x \in A_{n-1}\setminus A_n; \\
	\alpha_n^{-1}(x) & \text{ if } x \in B_{n-1}\setminus B_n; \\
	x & \text{ if } x \in \mathcal{G}^{(0)}\setminus (A\cup B).
	\end{cases}	
	\]
	From the condition on the diameters of $A_n$ and $B_n$ we deduce that $\{ x_0 \} = \bigcap_{n} A_n$ and $\{y_0\} = \bigcap_n B_n $, so the map $\alpha$ is indeed defined on all of $X$. It is straightforward to see that $\alpha$ is a homeomorphism in $[\mathcal{G}]$ that satisfies all the required properties. This completes the proof when $M(\mathcal{G}) \neq \emptyset$.
	
	When $M(\mathcal{G}) = \emptyset$ the proof works almost verbatim, except that there is no need to consider measures. Lemma \ref{lem:ComparisonByFullGroup}(1) applies since the relevant clopen sets are disjoint and nonempty.  
\end{proof}

The following proposition goes back to Bezuglyi and Medynets (in the setting of Cantor minimal systems) and plays a fundamental role in establishing simplicity of the commutator subgroups. Roughly speaking, it says that every homeomorphism in the full group can be decomposed into a product of full groups elements whose supports are ``small''.

\begin{prop} [{cf.\ \cite[Lemma 3.2]{BM:Colloq}}]   \label{prop:DecompSmall}
	Let $\mathcal{G}$ be a minimal, second countable, \'etale groupoid whose unit space is a Cantor space. Suppose $\mathcal{G}$ has comparison. Then for any  $\alpha\in [\mathcal{G}]$ and $\e > 0$, there exist $\alpha_1, \alpha_2, ..., \alpha_n$ in $[\mathcal{G}]$ and clopen sets $C_1, C_2,...,C_n$ such that
	\begin{enumerate}
		\item $\alpha = \alpha_1\alpha_2\cdots \alpha_n$,
		\item $\supp(\alpha_i) \subseteq C_i \neq \mathcal{G}^{(0)}$, and
		\item $\mu(C_i) < \e$ for all $\mu\in M(\mathcal{G})$ (which is vacuously satisfied if $M(\mathcal{G}) = \emptyset$)
	\end{enumerate} 
	for every $i\in \{1,...,n\}$. 
\end{prop}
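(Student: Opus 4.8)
The plan is to fix a sufficiently fine clopen partition of the unit space, then to peel off the way $\alpha$ transports mass between the pieces one piece at a time using the intertwining Lemma \ref{lem:GlasnerWeissIntertwining}; what remains is a homeomorphism preserving each piece of the partition, which splits trivially into factors each supported in a single small piece. To set up, I would use Lemma \ref{lem:Kerr}(2) to choose $\delta > 0$ so that every set of diameter $< \delta$ has measure $< \e/2$ for all $\mu \in M(\mathcal{G})$, and then fix a clopen partition $\mathcal{P} = \{P_1, \ldots, P_k\}$ of $\mathcal{G}^{(0)}$ into nonempty pieces of diameter $< \delta$; thus $\mu(P_i) < \e/2$ for all $i$ and all $\mu$. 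Since $\mathcal{G}^{(0)}$ is infinite we may take $k \geq 2$, so each $P_i \neq \mathcal{G}^{(0)}$.

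The core step constructs involutions $\gamma_1, \ldots, \gamma_{k-1} \in [\mathcal{G}]$ inductively so that $\alpha^{(j)} := \gamma_j \cdots \gamma_1 \alpha$ fixes each of $P_1, \ldots, P_j$ setwise, starting from $\alpha^{(0)} = \alpha$. Suppose $\alpha^{(j-1)}$ fixes $P_1, \ldots, P_{j-1}$ setwise, and set $A_j := \alpha^{(j-1)}(P_j)$. Because $\alpha^{(j-1)} \in [\mathcal{G}]$ it preserves every invariant measure (Proposition \ref{prop:MeasurePerserving}), so $\mu(A_j) = \mu(P_j)$ for all $\mu$; and since $\alpha^{(j-1)}$ fixes the disjoint pieces $P_1, \ldots, P_{j-1}$, both $A_j$ and $P_j$ are contained in $\mathcal{G}^{(0)} \setminus (P_1 \cup \cdots \cup P_{j-1})$. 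If $A_j = P_j$ set $\gamma_j = \id$; otherwise equality of measures together with Lemma \ref{lem:Kerr}(1) (a nonempty clopen set has measure bounded below) forces both $A_j \setminus P_j$ and $P_j \setminus A_j$ to be nonempty, so Lemma \ref{lem:GlasnerWeissIntertwining} supplies an involution $\gamma_j \in [\mathcal{G}]$ with $\gamma_j(A_j) = P_j$ and $\supp(\gamma_j) \subseteq A_j \cup P_j$. This support is clopen, lies in $\mathcal{G}^{(0)} \setminus (P_1 \cup \cdots \cup P_{j-1})$, and has measure $\mu(A_j \cup P_j) < \e/2 + \e/2 = \e$; in particular $\gamma_j$ fixes $P_1, \ldots, P_{j-1}$ pointwise, so $\alpha^{(j)}$ still fixes them setwise while $\alpha^{(j)}(P_j) = \gamma_j(A_j) = P_j$. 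After $k-1$ steps the homeomorphism $\rho := \alpha^{(k-1)}$ fixes $P_1, \ldots, P_{k-1}$ setwise, hence also $P_k = \mathcal{G}^{(0)} \setminus \bigcup_{i<k} P_i$.

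Finally, since $\rho(P_i) = P_i$ for every $i$, I would define $\rho_i \in [\mathcal{G}]$ to equal $\rho$ on $P_i$ and the identity elsewhere; these have pairwise disjoint supports, so $\rho = \rho_1 \cdots \rho_k$ with $\supp(\rho_i) \subseteq P_i$, $\mu(P_i) < \e$, and $P_i \neq \mathcal{G}^{(0)}$. Taking $C_j = A_j \cup P_j \supseteq \supp(\gamma_j)$ for $1 \le j \le k-1$ and $C_i = P_i$ for the last $k$ factors, and using that each $\gamma_j$ is an involution, I obtain
\[
\alpha = \gamma_1 \gamma_2 \cdots \gamma_{k-1}\, \rho_1 \rho_2 \cdots \rho_k,
\]
a decomposition into $2k-1$ full-group elements with small proper clopen supports, as required.

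The main obstacle is the bookkeeping guaranteeing that each newly constructed $\gamma_j$ leaves the previously fixed pieces untouched and that Lemma \ref{lem:GlasnerWeissIntertwining} genuinely applies at every stage; the measure identity $\mu(\alpha^{(j-1)}(P_j)) = \mu(P_j)$ from Proposition \ref{prop:MeasurePerserving}, combined with Lemma \ref{lem:Kerr}(1), is precisely what rules out the degenerate case $A_j \subsetneq P_j$ and so validates the intertwining lemma. When $M(\mathcal{G}) = \emptyset$ the measure conditions become vacuous, but one must still ensure both symmetric differences $A_j \setminus P_j$ and $P_j \setminus A_j$ are nonempty (shrinking the target slightly if necessary) and that each $C_i$ is a proper subset; this is handled by the empty-measure versions of the cited lemmas and by keeping $k$ large, so the argument carries over verbatim.
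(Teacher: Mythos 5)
Your argument in the case $M(\mathcal{G}) \neq \emptyset$ is correct and is essentially the paper's own argument with different bookkeeping: the paper also fixes a partition from Lemma \ref{lem:Kerr}(2) and uses Proposition \ref{prop:MeasurePerserving} plus Lemma \ref{lem:GlasnerWeissIntertwining} to correct $\alpha$ piece by piece (the paper's factors act as $\alpha$ on each piece, while yours are involutions followed by a partition-preserving remainder; both yield supports of measure $<\e$). One cosmetic point: to get $C_1 \neq \mathcal{G}^{(0)}$ you should note that one may assume $\e \le 1$, so that $\mu(C_1) < \e \le 1$ forces properness; for $j \ge 2$ properness is automatic since $C_j$ misses $P_1$.

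The genuine gap is the case $M(\mathcal{G}) = \emptyset$, where your claim that ``the argument carries over verbatim'' fails for two concrete reasons. First, without invariant measures nothing rules out the degenerate containments $A_j \subsetneq P_j$ or $P_j \subsetneq A_j$, and these genuinely occur: in a purely infinite groupoid (e.g.\ the Cuntz groupoid) the full group contains elements mapping a clopen set onto a \emph{proper} clopen subset of itself. In that situation Lemma \ref{lem:GlasnerWeissIntertwining} does not apply (it requires both $A_j \setminus P_j$ and $P_j \setminus A_j$ to be nonempty), and no fix by ``shrinking the target'' can work because you need $\gamma_j(A_j) = P_j$ \emph{exactly} to make $\alpha^{(j)}$ fix $P_j$ setwise; indeed no involution whatsoever can send $A_j$ onto $P_j$ when $A_j \subsetneq P_j$, since $\gamma^2 = \id$ and $\gamma(A_j) = P_j$ would give $P_j = \gamma(A_j) \subseteq \gamma(P_j) = A_j$, a contradiction. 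Second, even when the intertwining lemma does apply, without measures the set $C_1 = P_1 \cup \alpha(P_1)$ can equal all of $\mathcal{G}^{(0)}$ (this happens whenever $\alpha^{-1}$ maps $P_1$ onto a set containing the complement of $P_1$), violating requirement (2). The paper avoids all of this by treating $M(\mathcal{G}) = \emptyset$ with a separate, much shorter argument: choose a clopen set $A$ with $A \cap \alpha(A) = \emptyset$ and $A \cup \alpha(A) \neq \mathcal{G}^{(0)}$, let $\alpha_1$ be the involution swapping $A$ and $\alpha(A)$ via $\alpha$, and set $\alpha_2 := \alpha_1^{-1}\alpha$, which is supported in $\mathcal{G}^{(0)} \setminus A$; the measure conditions are vacuous, so $\alpha = \alpha_1\alpha_2$ already suffices. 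You should replace your last paragraph with an argument of this kind rather than asserting that the partition scheme goes through.
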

\begin{proof}
	Let $\alpha\in [\mathcal{G}]$ and $\e > 0$ be given. We assume that that $\alpha$ is nontrivial, otherwise there is nothing to prove. We first deal with the case $M(\mathcal{G}) =\emptyset$ (which is easier). Let $A$ be a clopen subset of $\mathcal{G}^{(0)}$ such that $A\cap \alpha(A) = \emptyset$ and $A\cup \alpha(A) \neq \mathcal{G}^{(0)}$. Define $\alpha_1$ by
	\[
	\alpha_1(x) := \begin{cases}
	\alpha(x) & \text{ if } x\in A, \\
	\alpha^{-1}(x) & \text{ if  } x\in \alpha(A), \\
	x & \text{ otherwise}.
	\end{cases}	
	\]
	Then $\alpha_1$ is an involution in $[\mathcal{G}]$ such that $\supp(\alpha_1) = A\cup \alpha(A) \neq \mathcal{G}^{(0)}$. Setting $\alpha_2 := \alpha_1^{-1}\alpha$, we see that $\supp(\alpha_2) \subseteq \mathcal{G}^{(0)}\setminus A \neq \mathcal{G}^{(0)}$ and $\alpha = \alpha_1\alpha_2$. 
	
	Now suppose $M(\mathcal{G})$ is nonempty. Using Lemma \ref{lem:Kerr}(2) we can find a clopen partition $\mathcal{G}^{(0)} = \bigsqcup_{i=1}^n A_i$ such that $\mu(A_i) < \frac{\e}{2}$ for all $i\in \{1,...,n\}$ and $\mu\in M(\mathcal{G})$. For convenience we set $A_1' := A_1\setminus \alpha(A_1)$ and $B_1' := \alpha(A_1)\setminus A_1$ (with $A_1'$ and $B_1'$ possibly being empty). By Proposition \ref{prop:MeasurePerserving} 
	\begin{align*}
	\mu(A_1') &= \mu(A_1) - \mu(A_1\cap \alpha(A_1)  )  \\ 
	&= \mu(\alpha(A_1)) - \mu(A_1\cap \alpha(A_1)  ) = \mu(B_1')
	\end{align*} 
	for all $\mu\in M(\mathcal{G})$.  Therefore by Lemma \ref{lem:GlasnerWeissIntertwining} there is a homeomorphism $\beta\in [\mathcal{G}]$ such that $\beta(B_1') = A_1'$ and $\supp(\beta)\subseteq A_1'\cup B_1'$.
	Define the map $\alpha_1$ on $\mathcal{G}^{(0)}$ by
	\[
	\alpha_1(x) := \begin{cases}
	\alpha(x) & \text{ if } x\in A_1, \\
	\beta(x) & \text{ if } x\in B_1', \\
	x & \text{ otherwise}.
	\end{cases}
	\] 
	Since both $\alpha$ and $\beta$ are in $[\mathcal{G}]$, so is $\alpha_1$. Note that the support of $\alpha_1$ is contained in $A_1\cup B_1'$ ($= A_1\cup \alpha(A_1)$) and that $\supp(\alpha_1^{-1}\alpha) \subseteq \mathcal{G}^{(0)}\setminus A_1$. 
	
	For $i\in \{2,...,n\}$, we apply the same argument inductively to the element $\alpha_{i-1}^{-1}\cdots \alpha_1^{-1}\alpha$ in $[\mathcal{G}]$ and the clopen set $A_i$. In this way we obtain $\alpha_2,...,\alpha_n\in [\mathcal{G}]$ such that for each $i\in \{2,...,n\}$
	\begin{enumerate}
		\item[(1)] $\supp(\alpha_i) \subseteq A_i\cup \alpha_{i-1}^{-1}\cdots \alpha_1^{-1}\alpha(A_i)$, and
		\item[(2)] $\supp( \alpha_i^{-1}\alpha_{i-1}^{-1}\cdots \alpha_1^{-1}\alpha   ) \subseteq \mathcal{G}^{(0)}\setminus A_i$.
	\end{enumerate}
	In fact, we can say more about the support of $\alpha_i^{-1}\alpha_{i-1}^{-1}\cdots \alpha_1^{-1}\alpha$. Since $\supp(\alpha_1^{-1}\alpha)\subseteq \mathcal{G}^{(0)}\setminus A_1$ and $A_2$ is disjoint from $A_1$, the set $\alpha_1^{-1}\alpha(A_2)$ is also disjoint from $A_1$. It follows that $\alpha_2$ is supported outside $A_1$ and hence $\alpha_2^{-1}\alpha_1^{-1}\alpha$ acts as the identity on $A_1\cup A_2$ (as opposed to just $A_2$). By induction, we have
	\begin{enumerate}
		\item[(2')] $\supp( \alpha_i^{-1}\alpha_{i-1}^{-1}\cdots \alpha_1^{-1}\alpha  ) \subseteq \mathcal{G}^{(0)} \setminus ( A_1\cup \cdots \cup A_i  )$
	\end{enumerate}
	for all $i\in \{2,...,n\}$. From (1) we see that $\supp(\alpha_i)$ is contained in the clopen set $C_i :=  A_i\cup \alpha_{i-1}^{-1}\cdots \alpha_1^{-1}\alpha(A_i)$, and $\mu(C_i) < \e$ for all $i\in \{1,...,n\}$ and $\mu\in M(\mathcal{G})$. From $(2')$ we have
	\[
	\supp(\alpha_n^{-1}\cdots \alpha_1^{-1}\alpha) \subseteq \mathcal{G}^{(0)}\setminus (A_1\cup \cdots \cup A_n) = \emptyset. 
	\]
	Therefore $\alpha_n^{-1}\cdots \alpha_1^{-1}\alpha$ is the identity map and $\alpha = \alpha_1\alpha_2\cdots \alpha_n$.
\end{proof}

\section{Simplicity of commutator subgroups of full groups}

In this section we establish the main result (Theorem \ref{thm:Simplicity}). The strategy is the same as \cite{Matui:Crelle} for topological full groups (and in fact, many constructions in this section come directly from \cite{Matui:Crelle}). We first show that any subgroup of $[\mathcal{G}]$ normalized by $[\mathcal{G}]'$ is in fact normal in $[\mathcal{G}]$ (Proposition \ref{prop:Normality}), and then prove that any normal subgroup of $[\mathcal{G}]$ must contain the commutator subgroup $[\mathcal{G}]'$ (Proposition \ref{prop:containment}).

The following lemma is inspired by \cite[Lemma 4.14]{Matui:Crelle}. The novelty here is to treat the finite case ($M(\mathcal{G}) \neq \emptyset$) and infinite case $(M(\mathcal{G}) =\emptyset)$  simultaneously. For this we need to modify the argument given in  \cite{Matui:Crelle}.

\begin{lemma} \label{lem:DecompNormal}
	Let $\mathcal{G}$ be a minimal, second countable, \'etale groupoid whose unit space is a Cantor space. Suppose $\mathcal{G}$ has comparison. If $N$ is a subgroup of $[\mathcal{G}]$ normalized by $[\mathcal{G}]'$, then for any $\tau \in N$ there exist $\tau _1, \tau_2\in N$ such that 
	\begin{enumerate}
		\item $\tau = \tau_1\tau_2$, and
		\item $\supp(\tau_i) \neq \mathcal{G}^{(0)}$ for $i=1, 2$.
	\end{enumerate} 
\end{lemma}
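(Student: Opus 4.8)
The plan is to reduce everything to the construction of a single auxiliary element of $N$, and then to build that element using comparison. If $\supp(\tau)\neq\mathcal{G}^{(0)}$ there is nothing to prove: I would take $\tau_1=\tau$ and $\tau_2=\id$, both of which lie in $N$ and have proper support. So the real content is the case $\supp(\tau)=\mathcal{G}^{(0)}$, and I will assume this from now on.

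The first step is a clean reduction. Writing $\tau=\tau_1\tau_2$ with $\tau_1=\tau\tau_2^{-1}$, the requirement $\supp(\tau_1)\neq\mathcal{G}^{(0)}$ is equivalent to $\tau_2$ coinciding with $\tau$ on some nonempty clopen set (here I use that $\mathcal{G}^{(0)}$ has a base of clopen sets). Hence it suffices to produce $\tau_2\in N$ with the two properties that (a) $\tau_2=\tau$ on some nonempty clopen set $V$, and (b) $\tau_2=\id$ on some nonempty clopen set $W$. Indeed $\tau_1:=\tau\tau_2^{-1}\in N$ is then the identity on $\tau(V)$, while $\tau_2$ is the identity on $W$, so both factors have proper support. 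At the level of $[\mathcal{G}]$ such a $\tau_2$ is easy to exhibit: since $\tau\neq\id$ and $\mathcal{G}^{(0)}$ is an infinite Cantor space, I can choose a nonempty clopen $Y$ with $Y\cap\tau(Y)=\emptyset$ and $Y\cup\tau(Y)\neq\mathcal{G}^{(0)}$, and let $\sigma$ be the involution equal to $\tau$ on $Y$, to $\tau^{-1}$ on $\tau(Y)$, and to the identity elsewhere. Then $\sigma\in[\mathcal{G}]$ with $\supp(\sigma)\subseteq Y\cup\tau(Y)$, and $\sigma\tau$ fixes $Y$, so
\[
\tau=\sigma\cdot(\sigma\tau)
\]
is already a decomposition into two proper-support elements of $[\mathcal{G}]$. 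The entire difficulty is that $\sigma$ (equivalently, the element $\tau_2$ above) must be shown to lie in $N$.

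The decisive step is therefore to realize such a $\tau_2$ inside $N$. Because $N$ is only assumed to be normalized by $[\mathcal{G}]'$, the elements of $N$ at my disposal are exactly the words in the $[\mathcal{G}]'$-conjugates of $\tau^{\pm1}$. Property (a) is cheap: for any $g\in[\mathcal{G}]'$ that is the identity on $V\cup\tau(V)$, the conjugate $g\tau g^{-1}\in N$ already agrees with $\tau$ on $V$. The obstruction is property (b), since $g\tau g^{-1}$, like every conjugate of $\tau$, has full support. So I would correct such a conjugate on a clopen set, creating a fixed open set $W$ without disturbing the agreement on $V$, by multiplying by further $[\mathcal{G}]'$-conjugates of $\tau^{\pm1}$ arranged to cancel on $W$. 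This is where comparison is used: Lemma \ref{lem:ComparisonByCommutator} and Lemma \ref{lem:GlasnerWeissIntertwining} manufacture elements of $[\mathcal{G}]'$ with prescribed, controlled supports implementing the local matchings that force the cancellation, and Proposition \ref{prop:DecompSmall} guarantees that finitely many such corrections suffice. The cases $M(\mathcal{G})\neq\emptyset$ and $M(\mathcal{G})=\emptyset$ are treated simultaneously through the two branches of those comparison lemmas, which is precisely the point of departure from \cite{Matui:Crelle}.

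The main obstacle I expect is exactly this passage from $[\mathcal{G}]$ to $N$. Conjugation preserves the property of having full support, so no single conjugate of $\tau$ is proper; a proper-support element of $N$ can only arise through genuine cancellation among several conjugates, and the delicate part is to make such a cancellation occur on a nonempty clopen set \emph{while simultaneously} reproducing $\tau$ exactly on another nonempty clopen set. It is for this that the comparison hypothesis is indispensable. Once $\tau_2\in N$ satisfying (a) and (b) has been produced, the decomposition $\tau=(\tau\tau_2^{-1})\,\tau_2$ completes the proof.
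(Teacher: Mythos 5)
Your reduction is correct, and it coincides with the skeleton of the paper's own proof: the paper likewise produces an element $\tau_1\in N$ with proper support that agrees with $\tau$ on a nonempty clopen set $A_0$, and then sets $\tau_2:=\tau_1^{-1}\tau$. You have also correctly located the difficulty: every $[\mathcal{G}]'$-conjugate of $\tau$ has full support, so a proper-support element of $N$ can only arise from cancellation in a product of such conjugates, and the cancellation must be arranged without destroying agreement with $\tau$ on some clopen set. But your proposal stops exactly at that point. The sentence ``I would correct such a conjugate \dots by multiplying by further $[\mathcal{G}]'$-conjugates of $\tau^{\pm1}$ arranged to cancel on $W$'' \emph{is} the content of the lemma, and you give no construction: you do not say which elements of $[\mathcal{G}]'$ to conjugate by, in what order to multiply, or why the resulting product simultaneously fixes a clopen set and reproduces $\tau$ on another. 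The supporting citations are also off: Proposition \ref{prop:DecompSmall} decomposes a given element of $[\mathcal{G}]$ into small-support factors and plays no role of ``guaranteeing that finitely many corrections suffice''; neither it nor Lemma \ref{lem:GlasnerWeissIntertwining} is used in the paper's proof of this lemma. The tools actually needed are Lemma \ref{lem:ComparisonByFullGroup} and Lemma \ref{lem:ComparisonByCommutator}.

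To make the gap concrete, here is what the missing step requires (and what the paper does). Choose a nonempty clopen $A$ with $A\cap\tau(A)=\emptyset$, use comparison to produce $\sigma_0\in[[\mathcal{G}]]$ with $B:=\sigma_0(\tau(A))$ disjoint from $A\cup\tau(A)$, and shrink $A$ so that $A\cup\tau(A)\cup\tau^{-1}(A)\cup B\neq\mathcal{G}^{(0)}$; let $C$ denote the complement of this union. Fix a proper nonempty clopen $A_0\subseteq A$ and set $B_0:=\sigma_0(\tau(A_0))$. From two involutions one builds $\sigma\in[\mathcal{G}]'$ cyclically permuting $A_0$, $\tau(A_0)$, $B_0$, agreeing with $\tau$ on $A_0$, with $\supp(\sigma)\subseteq A_0\cup\tau(A_0)\cup B_0$. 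By Lemma \ref{lem:ComparisonByCommutator}(1) there is $\gamma\in[[\mathcal{G}]]'$ fixing $A$ pointwise with $\gamma(\tau(A)\cup B)\subseteq C$. The element of $N$ is then the single commutator $\tau_0:=[\gamma\sigma\gamma^{-1},\tau]=(\gamma\sigma\gamma^{-1})(\tau\gamma\sigma^{-1}\gamma^{-1}\tau^{-1})$, which lies in $N$ because it is a conjugate of $\tau$ by an element of $[\mathcal{G}]'$ multiplied by $\tau^{-1}$. The two factors have supports inside $\gamma(\supp(\sigma))\subseteq A_0\cup C$ and $\tau\gamma(\supp(\sigma))\subseteq\tau(A)\cup\tau(C)$ respectively, and the entire point of the choices of $\sigma$ and $\gamma$ is that both sets avoid $A\setminus A_0$, so $\supp(\tau_0)\neq\mathcal{G}^{(0)}$; finally $\tau_1:=\gamma^{-1}\tau_0\gamma$ still has proper support and agrees with $\tau$ on $A_0$, because $\gamma$ and $\tau\gamma\sigma^{-1}\gamma^{-1}\tau^{-1}$ both act trivially on $A$. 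This interplay --- one conjugate, one multiplication by $\tau^{-1}$, with supports controlled so that cancellation happens off $A_0$ while $\sigma$ reproduces $\tau$ on $A_0$ --- is the heart of the lemma, and it is precisely what your proposal leaves unproved.
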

\begin{proof}
	Let $A$ be a nonempty clopen subset of $\mathcal{G}^{(0)}$ such that $A\cap \tau(A) = \emptyset$ and $A\cup \tau(A) \neq \mathcal{G}^{(0)}$. By Lemma \ref{lem:Kerr}(2), if $M(\mathcal{G})$ is nonempty, we may also arrange that $\mu(A) < \frac{1}{16}$ for all $\mu\in M(\mathcal{G})$. Applying Lemma \ref{lem:ComparisonByFullGroup} we can find an element $\sigma_0 \in [[\mathcal{G}]]$ ($\subseteq [\mathcal{G}]$) such that $B := \sigma_0( \tau(A) ) \subseteq \mathcal{G}^{(0)}\setminus (A\cup \tau(A))$. Note that by shrinking $A$ we may assume that $A\cup \tau(A)\cup \tau^{-1}(A)\cup B \neq \mathcal{G}^{(0)}$. Let $A_0\subseteq A$ be a proper nonempty clopen subset of $A$ and set $B_0 := \sigma_0(\tau(A_0))\subseteq B$. Define $\sigma_1, \sigma_2\in [\mathcal{G}]$ by 
	\[
	\sigma_1(x) := \begin{cases}
	\tau(x) & \text{ if } x\in A_0,\\
	\tau^{-1}(x) & \text{ if } x\in \tau(A_0), \\
	x & \text{ otherwise},
	\end{cases}	
	\quad 
	\sigma_2(x) := \begin{cases}
	\sigma_0(x) & \text{ if } x\in \tau(A_0),\\
	\sigma_0^{-1}(x) & \text{ if } x\in B_0,\\
	x & \text{ otherwise},
	\end{cases}
	\]
	Then, as in the proof of Lemma \ref{lem:ComparisonByCommutator}, $\sigma := [\sigma_2, \sigma_1] = \sigma_1\sigma_2\sigma_1^{-1}\sigma_2^{-1} = \sigma_1\sigma_2$ cyclically permutes the sets $A_0$, $\tau(A_0)$, and $B_0$. In particular $\supp(\sigma)\subseteq A_0\cup \tau(A_0)\cup B_0$ and  $\sigma(x) = \tau(x)$ for all $x\in A_0$. 
	
	By Lemma \ref{lem:ComparisonByCommutator}(1) we can find an element $\gamma\in [[\mathcal{G}]]'$ ($\subseteq [\mathcal{G}]'$) such that $\gamma( \tau(A) \cup B  ) \subseteq \mathcal{G}^{(0)}\setminus (A\cup \tau(A)\cup \tau^{-1}(A)\cup B) =: C$ and $\supp(\gamma) \subseteq \tau(A)\cup B\cup C$. Note that in particular $\gamma$ acts as the identity on $A$. Observe that
	\begin{align*}
	\supp( \tau \gamma \sigma^{-1} \gamma^{-1} \tau^{-1}  ) &= \tau\gamma( \supp(\sigma)) \subseteq \tau\gamma(  A_0\cup \tau(A_0)\cup  B_0 )\\
	&\subseteq \tau(A_0 \cup C ) \subseteq  \tau(A) \cup \tau(C).
	\end{align*}
	Since $C$ and $\tau^{-1}(A)$ are disjoint, $\tau(C)$ is disjoint from $A$. It follows that $\tau \gamma \sigma^{-1} \gamma^{-1} \tau^{-1} $ also acts as the identity on $A$. Define $\tau_0, \tau_1$ by
	\[
	\tau_0 := [ \gamma \sigma \gamma^{-1}, \tau     ] \quad  \text{and} \quad \tau_1 := \gamma^{-1}\tau_0 \gamma.
	\]
	Since $N$ is normalized by $[\mathcal{G}]'$, the elements $\tau_0$ and $\tau_1$ belong to $N$ (remember that both $\sigma$ and $\gamma$ are in $[\mathcal{G}]'$). We have 	
	\begin{align*}
	\supp(\tau_0) &= \supp( \gamma \sigma \gamma^{-1} \tau \gamma \sigma^{-1} \gamma^{-1} \tau^{-1}  )  \\
	&\subseteq \supp(\gamma\sigma \gamma^{-1}  )\cup \supp( \tau \gamma \sigma^{-1} \gamma^{-1} \tau^{-1}  ) \\
	&= \gamma(\supp(\sigma)) \cup \tau \gamma( \supp(\sigma)) \\
	&\subseteq \gamma(A_0\cup \tau(A_0)\cup B_0 )\cup \tau(A)\cup \tau(C) \\
	&\subseteq  A_0 \cup C \cup \tau(A) \cup \tau(C).
	\end{align*}
	We have seen that $C\cup \tau(A) \cup \tau(C)$ is disjoint from $A$. Since $A_0$ is a proper subset of $A$, the support of $\tau_0$ does not contain $A\setminus A_0$ and so $\supp(\tau_0) \neq  \mathcal{G}^{(0)}$. It follows that
	\[
	\supp(\tau_1) = \gamma^{-1}(\supp(\tau_0)) \neq \mathcal{G}^{(0)}.
	\]
	Meanwhile, for any $x\in A_0$, we have
	\[
	\tau_1(x) = \sigma \gamma^{-1} (\tau \gamma \sigma^{-1} \gamma^{-1} \tau^{-1}) \gamma(x) =  \sigma(x) = \tau(x).   
	\]
	This implies that $\tau_2 := \tau_1^{-1}\tau\in N$ is supported outside $A_0$, hence $\tau = \tau_1\tau_2$ is a desired decomposition.
\end{proof}

\begin{prop} \label{prop:Normality}
	Let $\mathcal{G}$ be a minimal, second countable, \'etale groupoid whose unit space is a Cantor space. Suppose $\mathcal{G}$ has comparison. If $N$ is a subgroup of $[\mathcal{G}]$ normalized by $[\mathcal{G}]'$, then $N$ is normal in $[\mathcal{G}]$. 
\end{prop}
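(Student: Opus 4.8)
Here is how I would prove Proposition~\ref{prop:Normality}.

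The plan is to show $\alpha N\alpha^{-1}\subseteq N$ for every $\alpha\in[\mathcal{G}]$, which gives normality since the same statement applied to $\alpha^{-1}$ yields the reverse inclusion. Fix $\alpha\in[\mathcal{G}]$ and $\tau\in N$. Using Lemma~\ref{lem:DecompNormal} to write $\tau=\tau_1\tau_2$ with $\tau_i\in N$ and $\supp(\tau_i)\neq\mathcal{G}^{(0)}$, and using that conjugation by $\alpha$ is a homomorphism while $N$ is a subgroup, I may assume from the outset that $\supp(\tau)\neq\mathcal{G}^{(0)}$; fix a clopen set $E$ with $\supp(\tau)\subseteq E\neq\mathcal{G}^{(0)}$. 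The elementary but decisive observation is that $\alpha\tau\alpha^{-1}$ depends on $\alpha$ only through its restriction to $\supp(\tau)$: if $\beta\in[\mathcal{G}]$ satisfies $\beta|_{\supp(\tau)}=\alpha|_{\supp(\tau)}$, then, since $\supp(\tau)$ is $\tau$-invariant, for $x\in\supp(\tau)$ both $\alpha\tau\alpha^{-1}$ and $\beta\tau\beta^{-1}$ send $\alpha(x)=\beta(x)$ to $\alpha(\tau(x))=\beta(\tau(x))$, while off the common support $\alpha(\supp(\tau))=\beta(\supp(\tau))$ both are the identity; hence $\alpha\tau\alpha^{-1}=\beta\tau\beta^{-1}$. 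Thus it suffices to produce a single $\beta\in[\mathcal{G}]'$ with $\beta|_E=\alpha|_E$: such a $\beta$ normalizes $N$ by hypothesis, so $\alpha\tau\alpha^{-1}=\beta\tau\beta^{-1}\in N$.

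To build $\beta$ I would decompose $\alpha$ into pieces of small support and correct each piece by a commutator. By Lemma~\ref{lem:Kerr}(1), applied to the nonempty clopen set $D:=\mathcal{G}^{(0)}\setminus E$, there is $\eta>0$ with $\mu(D)\geq\eta$ for all $\mu\in M(\mathcal{G})$. Apply Proposition~\ref{prop:DecompSmall} with $\e<\eta/2$ to write $\alpha=\alpha_1\cdots\alpha_n$ with $\supp(\alpha_i)\subseteq C_i\neq\mathcal{G}^{(0)}$ clopen and $\mu(C_i)<\e$ for all $\mu$. Put $E^{(j)}:=\alpha_{j+1}\cdots\alpha_n(E)$, a clopen set with $\mu(E^{(j)})=\mu(E)$ by Proposition~\ref{prop:MeasurePerserving}. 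The choice $\e<\eta/2$ forces $\mu(C_i)<\mu\bigl(\mathcal{G}^{(0)}\setminus(C_i\cup E^{(i)})\bigr)$ for all $\mu$, so by Lemma~\ref{lem:ComparisonByFullGroup}(1) there is an involution $\theta_i\in[[\mathcal{G}]]$ with $C_i':=\theta_i(C_i)\subseteq\mathcal{G}^{(0)}\setminus(C_i\cup E^{(i)})$ and $\supp(\theta_i)\subseteq C_i\cup C_i'$. Setting $\beta_i:=[\alpha_i,\theta_i]\in[\mathcal{G}]'$, a direct computation (cf.\ the proof of Lemma~\ref{lem:ComparisonByCommutator}) shows that $\beta_i$ equals $\alpha_i$ on $C_i$, equals $\theta_i\alpha_i^{-1}\theta_i$ on $C_i'$, and is the identity elsewhere; since $C_i'$ is disjoint from $E^{(i)}$ this means $\beta_i$ agrees with $\alpha_i$ on all of $E^{(i)}$. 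A downward induction on $i$, using $E^{(i)}=\alpha_{i+1}\cdots\alpha_n(E)$, then gives $(\beta_i\cdots\beta_n)|_E=(\alpha_i\cdots\alpha_n)|_E$, so that $\beta:=\beta_1\cdots\beta_n\in[\mathcal{G}]'$ satisfies $\beta|_E=\alpha|_E$, completing the argument.

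The step I expect to be the crux is the existence of the displacing involutions $\theta_i$, that is, finding at each stage a nonempty clopen set disjoint from $C_i\cup E^{(i)}$ into which $C_i$ can be pushed. When $M(\mathcal{G})\neq\emptyset$ this is exactly where the uniform lower bound of Lemma~\ref{lem:Kerr}(1) and the measure invariance of Proposition~\ref{prop:MeasurePerserving} are used: together they force $\mu\bigl(\mathcal{G}^{(0)}\setminus(C_i\cup E^{(i)})\bigr)\geq\mu(D)-\mu(C_i)\geq\eta-\mu(C_i)>\mu(C_i)$ for every $\mu$, so comparison applies uniformly. When $M(\mathcal{G})=\emptyset$ there are no invariant measures to track, and the comparison hypothesis is correspondingly stronger—any clopen set embeds into any nonempty clopen set—so the involutions exist as soon as each $C_i\cup E^{(i)}$ is a \emph{proper} subset of $\mathcal{G}^{(0)}$; arranging this properness throughout the induction (by choosing the decomposition of $\alpha$ in Proposition~\ref{prop:DecompSmall} suitably relative to the moving images $E^{(i)}$) is the analogue of the measure bookkeeping and is where the two cases genuinely differ. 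Keeping the displaced copies $C_i'$ disjoint from the evolving clopen images $E^{(i)}$ of the support is therefore the heart of the matter, and the remaining verifications are routine.
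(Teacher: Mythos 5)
Your strategy is sound, and when $M(\mathcal{G})\neq\emptyset$ your argument is complete: the reduction via Lemma~\ref{lem:DecompNormal}, the observation that $\alpha\tau\alpha^{-1}$ depends only on $\alpha\vert_{\supp(\tau)}$, and the inductive construction of $\beta\in[\mathcal{G}]'$ with $\beta\vert_E=\alpha\vert_E$ all check out. (Conceptually this is close to the paper's own proof, which also decomposes $\alpha$ by Proposition~\ref{prop:DecompSmall} and uses a displacing element $\gamma$ from Lemma~\ref{lem:ComparisonByFullGroup}; the paper's identity $\alpha\tau\alpha^{-1}=[\alpha,\gamma]\tau[\alpha,\gamma]^{-1}$ is exactly your key observation applied to $\beta=[\alpha,\gamma]$, since $\alpha^{-1}[\alpha,\gamma]=\gamma\alpha^{-1}\gamma^{-1}$ is supported off $\supp(\tau)$.) However, the case $M(\mathcal{G})=\emptyset$ is a genuine gap as written: you require $\theta_i(C_i)$ to be disjoint from $C_i\cup E^{(i)}$, hence need $C_i\cup E^{(i)}\neq\mathcal{G}^{(0)}$, and you only assert that this can be arranged ``by choosing the decomposition suitably.'' The means you indicate do not work in general: in the proof of Proposition~\ref{prop:DecompSmall} (case $M(\mathcal{G})=\emptyset$) the last factor is supported in $C_n=\mathcal{G}^{(0)}\setminus A$, so properness of $C_n\cup E^{(n)}=C_n\cup E$ forces $A\not\subseteq E$; but if $\alpha$ fixes $\mathcal{G}^{(0)}\setminus E$ pointwise, every clopen $A$ meeting $\mathcal{G}^{(0)}\setminus E$ satisfies $A\cap\alpha(A)\neq\emptyset$, so no admissible $A$ exists. (That particular situation can be rescued by not decomposing $\alpha$ at all, but it shows your bookkeeping genuinely needs a case analysis you have not supplied.)

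The gap closes immediately once you notice that your disjointness demand is stronger than what the induction actually uses. You never need $\theta_i(C_i)$ to avoid $C_i$; you only need $\theta_i(C_i)\cap E^{(i)}=\emptyset$. Indeed $\beta_i=[\alpha_i,\theta_i]=\alpha_i\,(\theta_i\alpha_i^{-1}\theta_i^{-1})$, and the second factor has support $\theta_i(\supp(\alpha_i))\subseteq\theta_i(C_i)$; if this misses $E^{(i)}$, then $\beta_i$ agrees with $\alpha_i$ on $E^{(i)}$, which is all the downward induction requires (your explicit piecewise formula for $\beta_i$, which presupposes $\theta_i(C_i)\cap C_i=\emptyset$, can be discarded). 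So apply Lemma~\ref{lem:ComparisonByFullGroup} with $A=C_i$ and $B=\mathcal{G}^{(0)}\setminus E^{(i)}$: this $B$ is automatically nonempty because $E^{(i)}$ is a homeomorphic image of the proper clopen set $E$, we have $C_i\neq\mathcal{G}^{(0)}$, and the measure hypothesis $\mu(C_i)<\mu(B)$ holds by your choice $\e<\eta/2$ together with Proposition~\ref{prop:MeasurePerserving} (and is vacuous when $M(\mathcal{G})=\emptyset$). With this weakening the two cases are handled uniformly and no properness of $C_i\cup E^{(i)}$ is ever needed---which is precisely how the paper's proof sidesteps the issue: its $\gamma$ displaces $\supp(\alpha)$ off a clopen set containing $\supp(\tau)$ only, with no self-disjointness requirement.
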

\begin{proof}
	Given $\tau \in N$ and $\alpha \in [\mathcal{G}]$, we need to show that $\alpha\tau\alpha^{-1} \in N$. In view of Lemma \ref{lem:DecompNormal} we may assume that there is a clopen set $B$ such that $\supp(\tau) \subseteq B \neq \mathcal{G}^{(0)}$. In addition, by Proposition \ref{prop:DecompSmall} and Lemma \ref{lem:Kerr}(1) we may assume that $\supp(\alpha)$ is contained in some clopen set $A$ satisfying $A \neq \mathcal{G}^{(0)}$ and $\mu(A) < \mu( \mathcal{G}^{(0)}\setminus B )$ for all $\mu\in M(\mathcal{G})$ (with $M(\mathcal{G})$ possibly being empty). Applying Lemma \ref{lem:ComparisonByFullGroup} we obtain an element $\gamma\in [[\mathcal{G}]]$ ($\subseteq [\mathcal{G}]$) such that $\gamma(A) \subseteq \mathcal{G}^{(0)}\setminus B$. In particular $\supp( \gamma \alpha \gamma^{-1} ) = \gamma( \supp(\alpha)  )$ is disjoint from $\supp(\tau)$, so $\gamma \alpha \gamma^{-1}$ and $\tau$ commute. It follows that
	\begin{align*}
	\alpha \tau \alpha^{-1} &= \alpha( \gamma \alpha^{-1} \gamma^{-1}  ) (\gamma \alpha \gamma^{-1}) \tau \alpha^{-1} = \alpha( \gamma \alpha^{-1} \gamma^{-1}  )\tau (\gamma \alpha \gamma^{-1})  \alpha^{-1} \\
	&= [\alpha, \gamma]\tau [ \alpha, \gamma ]^{-1},
	\end{align*}
	which belongs to $N$ because $N$ is normlized by $[\mathcal{G}]'$.
\end{proof}

\begin{lemma} \label{lem:CommutatorProduct}
	Let $G$ be a group and $N$ be a normal subgroup of $G$. If $g_1, g_2, ..., g_n, h_1, h_2, ..., h_m$ are elements in $G$ such that each $[g_i, h_j]$ belongs to $N$, then $[g_1\cdots g_n, h_1\cdots h_m]$ also belongs to $N$.
\end{lemma}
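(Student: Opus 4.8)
The plan is to reduce the general statement to the single-commutator case by a two-stage induction, using the standard commutator identities together with the normality of $N$. The key algebraic facts I would rely on are the two expansion formulas
\[
[xy, h] = x[y,h]x^{-1}\,[x,h] \qquad \text{and} \qquad [g, xy] = [g,x]\,x[g,y]x^{-1},
\]
both of which are elementary verifications from the definition $[a,b] = aba^{-1}b^{-1}$. The crucial observation is that since $N$ is \emph{normal} in $G$, it is closed under conjugation by arbitrary elements of $G$; so whenever a commutator lies in $N$, so does any conjugate of it.

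First I would fix $h\in G$ and prove by induction on $n$ that if each $[g_i, h]$ lies in $N$ for $i=1,\dots,n$, then $[g_1\cdots g_n, h]\in N$. The base case $n=1$ is the hypothesis. For the inductive step, write $g_1\cdots g_n = g_1\,(g_2\cdots g_n)$ and apply the first expansion formula with $x = g_1$ and $y = g_2\cdots g_n$:
\[
[g_1\cdots g_n, h] = g_1\,[g_2\cdots g_n, h]\,g_1^{-1}\,[g_1, h].
\]
By the inductive hypothesis $[g_2\cdots g_n, h]\in N$, and since $N$ is normal its conjugate by $g_1$ also lies in $N$; the factor $[g_1,h]$ lies in $N$ by hypothesis. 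As $N$ is a subgroup, the product is in $N$. This establishes the claim that $[g_1\cdots g_n, h]\in N$ for every single $h$ whose commutators with the $g_i$ all lie in $N$.

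Next I would treat the second argument by a symmetric induction on $m$. Set $g := g_1\cdots g_n$; by the first stage we know $[g, h_j]\in N$ for each $j$, since $[g_i, h_j]\in N$ for all $i$ by hypothesis. Now induct on $m$ using the second expansion formula with $x = h_1$ and $y = h_2\cdots h_m$:
\[
[g, h_1\cdots h_m] = [g, h_1]\,h_1\,[g, h_2\cdots h_m]\,h_1^{-1}.
\]
The first factor is in $N$ by the first-stage conclusion, and the conjugate $h_1[g, h_2\cdots h_m]h_1^{-1}$ lies in $N$ by the inductive hypothesis together with normality. This yields $[g_1\cdots g_n, h_1\cdots h_m]\in N$, completing the proof.

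The argument is essentially formal, so there is no deep obstacle; the only point demanding care is the bookkeeping that normality—not merely subgroup closure—is what licenses passing conjugates back into $N$ at each step, since the expansion identities unavoidably introduce conjugating factors drawn from all of $G$ rather than from $N$. I would make sure to invoke normality explicitly at each use rather than silently absorbing the conjugates.
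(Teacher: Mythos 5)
Your proof is correct and follows essentially the same route as the paper: the same two commutator expansion identities, normality of $N$ to absorb the conjugating factors, and induction on $n$ and $m$. The paper merely states the $n=m=2$ case and says ``the general case follows by induction,'' whereas you spell out that induction explicitly; there is no substantive difference.
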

\begin{proof}
	Observe that
	\[
	[g_1g_2, h] = g_1g_2hg_2^{-1}g_1^{-1}h^{-1} = g_1[g_2, h]hg_1^{-1}h^{-1} = g_1[g_2, h]g_1^{-1}[g_1, h]
	\]
	and similarly
	\[
	[g, h_1h_2] = [g, h_1]h_1[g, h_2]h_1^{-1}.
	\]
	Since $N$ is normal, this proves the case $n= m = 2$. The general case follows by induction. 
\end{proof}

\begin{prop} [{cf.\ \cite[Theorem 4.7, Theorem 4.16]{Matui:Crelle}}] \label{prop:containment}
	Let $\mathcal{G}$ be a minimal, second countable, \'etale groupoid whose unit space is a Cantor space. Suppose $\mathcal{G}$ has comparison. Then any nontrivial normal subgroup of $[\mathcal{G}]$ contains $[\mathcal{G}]'$. 
\end{prop}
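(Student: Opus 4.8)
The plan is to show directly that every commutator $[\alpha,\beta]$ with $\alpha,\beta\in[\mathcal{G}]$ lies in the given normal subgroup $N$; since such commutators generate $[\mathcal{G}]'$, this yields $[\mathcal{G}]'\subseteq N$. Because $N$ is nontrivial, I first fix some $\tau\in N$ with $\tau\neq\id$ and a point $x_0$ with $\tau(x_0)\neq x_0$. As $\mathcal{G}^{(0)}$ is a Cantor space and $\tau$ is continuous, there is a nonempty clopen set $A_0\ni x_0$ with $A_0\cap\tau(A_0)=\emptyset$; shrinking $A_0$ I may also assume $A_0\cup\tau(A_0)\neq\mathcal{G}^{(0)}$, and by Lemma \ref{lem:Kerr}(1) I record a constant $\eta>0$ with $\mu(A_0)\geq\eta$ for all $\mu\in M(\mathcal{G})$ (vacuous when $M(\mathcal{G})=\emptyset$).

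The heart of the argument is a local statement: if $\alpha',\beta'\in[\mathcal{G}]$ are both supported in a clopen set $A$ with $A\cap\tau(A)=\emptyset$, then $[\alpha',\beta']\in N$. Indeed, since $\tau\in N$ and $N$ is normal, $[\tau,g]=\tau\,(g\tau^{-1}g^{-1})\in N$ for every $g\in[\mathcal{G}]$. Moreover $\tau\alpha'\tau^{-1}$ and $\tau\beta'\tau^{-1}$ are supported in $\tau(A)$, which is disjoint from $A$, so each of them commutes with both $\alpha'$ and $\beta'$. Using these commutation relations a direct computation gives
\[
[\alpha',\beta']=[\tau,\alpha'\beta']^{-1}\,[\tau,\alpha']\,[\tau,\beta'],
\]
and since each factor on the right lies in $N$, so does $[\alpha',\beta']$.

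With this in hand I reduce the general case to the local one. Fix $\alpha,\beta\in[\mathcal{G}]$ and choose $\e>0$ with $2\e<\eta$ (any $\e$ if $M(\mathcal{G})=\emptyset$). By Proposition \ref{prop:DecompSmall} write $\alpha=\alpha_1\cdots\alpha_p$ and $\beta=\beta_1\cdots\beta_q$ with $\supp(\alpha_i)\subseteq C_i\neq\mathcal{G}^{(0)}$, $\supp(\beta_j)\subseteq D_j\neq\mathcal{G}^{(0)}$, and $\mu(C_i),\mu(D_j)<\e$ for all $\mu\in M(\mathcal{G})$. By Lemma \ref{lem:CommutatorProduct} (applied to the normal subgroup $N$ of $[\mathcal{G}]$) it suffices to show $[\alpha_i,\beta_j]\in N$ for each pair $i,j$. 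Setting $E:=C_i\cup D_j$, both $\supp(\alpha_i)$ and $\supp(\beta_j)$ lie in $E$, while $\mu(E)<2\e<\eta\leq\mu(A_0)$ for all $\mu\in M(\mathcal{G})$ and $E\neq\mathcal{G}^{(0)}$. Hence Lemma \ref{lem:ComparisonByFullGroup} provides $\phi\in[[\mathcal{G}]]\subseteq[\mathcal{G}]$ with $\phi(E)\subseteq A_0$. Then $\phi\alpha_i\phi^{-1}$ and $\phi\beta_j\phi^{-1}$ are supported in $\phi(E)\subseteq A_0$, so the local statement gives $\phi[\alpha_i,\beta_j]\phi^{-1}=[\phi\alpha_i\phi^{-1},\phi\beta_j\phi^{-1}]\in N$; as $N$ is normal in $[\mathcal{G}]$, this yields $[\alpha_i,\beta_j]\in N$, completing the reduction.

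I expect the main obstacle to be the bookkeeping that guarantees each pair of pieces can actually be compressed into $A_0$, and in particular the dichotomy between $M(\mathcal{G})\neq\emptyset$ and $M(\mathcal{G})=\emptyset$. When $M(\mathcal{G})\neq\emptyset$ the invariant measures do all the work: the uniform lower bound $\eta\leq\mu(A_0)$ from Lemma \ref{lem:Kerr}(1) combined with $\mu(E)<2\e<\eta$ makes Lemma \ref{lem:ComparisonByFullGroup} applicable and simultaneously forces $E\neq\mathcal{G}^{(0)}$. When $M(\mathcal{G})=\emptyset$ comparison is unconditional, so the compression of $E$ into $A_0$ needs only that $E$ be a \emph{proper} clopen subset of $\mathcal{G}^{(0)}$; this is the delicate point, since proper unions $C_i\cup D_j$ are no longer guaranteed by a measure bound, and one must arrange it by refining the decomposition (splitting a piece $\beta_j$ further via Proposition \ref{prop:DecompSmall}, using that each individual support is already proper) so that every relevant union of supports stays proper. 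Once this properness is secured the compression argument proceeds verbatim, and the finite and infinite cases are handled by the same scheme.
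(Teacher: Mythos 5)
Your route is genuinely different from the paper's, and its core mechanism is sound. The paper proves an \emph{asymmetric} claim --- if some $\gamma\in N$ satisfies $\gamma(\supp(\alpha))\cap\supp(\beta)=\emptyset$ then $[\alpha,\beta]\in N$ --- and then has to do real work to manufacture such a $\gamma$ \emph{inside} $N$: it conjugates a nontrivial $\tau_0\in N$ by a comparison map $\sigma$ to get $\tau$, and takes $\gamma=[\gamma_0,\tau]$ for a comparison map $\gamma_0$, checking that $\gamma$ still moves $A$ off $B$. Your \emph{symmetric} local statement avoids that construction entirely: the identity
\[
[\alpha',\beta']=[\tau,\alpha'\beta']^{-1}\,[\tau,\alpha']\,[\tau,\beta']
\]
is correct (writing $a'=\tau\alpha'\tau^{-1}$, $b'=\tau\beta'\tau^{-1}$, each $[\tau,g]=\tau\,(g\tau^{-1}g^{-1})\in N$ by normality, and the disjoint-support commutation relations collapse the right-hand side to $[\alpha',\beta']$), so you only ever need normality of $N$ and conjugation in $[\mathcal{G}]$. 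The price you pay is that Lemma \ref{lem:ComparisonByFullGroup} must be applied to the \emph{union} $\supp(\alpha_i)\cup\supp(\beta_j)$, i.e.\ both supports must sit in a common set that can be compressed into $A_0$. When $M(\mathcal{G})\neq\emptyset$ your argument is complete as written: $\mu(C_i\cup D_j)<2\e<\eta\leq\mu(A_0)$ for all invariant $\mu$ gives both the comparison hypothesis and (automatically) the properness of $C_i\cup D_j$.

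The gap is in the case $M(\mathcal{G})=\emptyset$, and your proposed repair does not close it. Proposition \ref{prop:DecompSmall} in that case produces pieces whose supports lie in \emph{proper} clopen sets, but gives no control on \emph{where} those sets sit; indeed in its own proof one piece is supported in $A\cup\alpha(A)$ and the other in $\mathcal{G}^{(0)}\setminus A$, so for two different elements $\alpha,\beta$ the unions of supports can easily be all of $\mathcal{G}^{(0)}$ (nothing forces the two ``swap sets'' to intersect). Re-applying Proposition \ref{prop:DecompSmall} to $\beta_j$, as you suggest, has exactly the same defect: the new pieces again have proper supports but with no control relative to $C_i$, so the unions can still be everything, and no homeomorphism can map $\mathcal{G}^{(0)}$ into the proper set $A_0$. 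What you actually need is a splitting \emph{relative to the prescribed set}: let $V:=\mathcal{G}^{(0)}\setminus C_i$; if $\beta_j$ fixes $V$ pointwise then $\supp(\beta_j)\subseteq C_i$ and there is nothing to do; otherwise choose a nonempty clopen $W\subseteq V$ with $\beta_j(W)\cap W=\emptyset$, split $W=W_1\sqcup W_2$ into nonempty clopen pieces, let $\beta'$ be the involution equal to $\beta_j$ on $W_1$, to $\beta_j^{-1}$ on $\beta_j(W_1)$, and the identity elsewhere, and set $\beta'':=(\beta')^{-1}\beta_j$. Then $\supp(\beta')\subseteq \mathcal{G}^{(0)}\setminus W_2$ (since $\beta_j(W_1)$ is disjoint from $W$) and $\supp(\beta'')\subseteq\mathcal{G}^{(0)}\setminus W_1$, so both unions with $C_i$ are proper clopen sets, and Lemma \ref{lem:CommutatorProduct} reduces $[\alpha_i,\beta_j]$ to $[\alpha_i,\beta']$ and $[\alpha_i,\beta'']$, to which your compression argument applies. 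This is a minor variant of the construction \emph{inside} the proof of Proposition \ref{prop:DecompSmall}, not an application of its statement; once it is inserted, your proof is correct in both cases and is arguably more economical than the paper's, at the cost of this extra bookkeeping that the paper's asymmetric claim (which only ever needs $C_i\neq\mathcal{G}^{(0)}$ and $D_j\neq\mathcal{G}^{(0)}$ \emph{separately}) never has to face.
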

\begin{proof}
	Let $N$ be a nontrivial normal subgroup of $[\mathcal{G}]$. We first prove the following claim: given $\alpha, \beta\in [\mathcal{G}]$, if there is an element $\gamma\in N$ such that $\gamma(\supp(\alpha)) \cap \supp(\beta) = \emptyset$, then the commutator $[\alpha, \beta]$ belongs to $N$. Indeed, since 
	\[ \supp( \gamma \alpha^{-1} \gamma^{-1} ) = \gamma( \supp(\alpha^{-1})) = \gamma( \supp(\alpha)),
	\]
	the elements $\gamma\alpha^{-1}\gamma^{-1}$ and $\beta$ commute. Therefore
	\begin{align*}
	[\alpha, \beta] &= \alpha\beta \alpha^{-1}\beta^{-1} = \alpha \beta( \gamma \alpha^{-1} \gamma^{-1}  )( \gamma \alpha \gamma^{-1}) \alpha^{-1}\beta^{-1} \\
	&= \alpha ( \gamma \alpha^{-1} \gamma^{-1}  )\beta( \gamma \alpha \gamma^{-1}) \alpha^{-1}\beta^{-1} \\
	&= [\alpha, \gamma]\beta [\alpha, \gamma]^{-1} \beta^{-1}.
	\end{align*}
	As $N$ is normal in $[\mathcal{G}]$, the commutator $[\alpha, \gamma]$ is in $N$ and hence so is $[\alpha, \beta]$, which establishes the claim.

	Now we prove the theorem. Let $\alpha, \beta$ be two elements in $[\mathcal{G}]$. We need to show that the commutator $[\alpha, \beta]$ belongs to $N$. By Proposition \ref{prop:DecompSmall} and Lemma \ref{lem:CommutatorProduct} we may assume that there is a clopen set $B$ such that $\supp(\beta) \subseteq B \neq \mathcal{G}^{(0)}$. Let $\tau_0$ be a nontrivial element in $N$, and let $C$ be a clopen set such that $\tau_0(C) \cap C =\emptyset$. By Lemma \ref{lem:Kerr}(1) there is a constant $\eta > 0$ satisfying 
	\[
	\min\{  \mu(\mathcal{G}^{(0)}\setminus B),\; \mu(C)  \} \geq \eta 
	\]
	for all $\mu\in M(\mathcal{G})$ (if $M(\mathcal{G})$ is nonempty). Applying Proposition \ref{prop:DecompSmall} and Lemma \ref{lem:CommutatorProduct} to the element $\alpha$ we may assume that there is a clopen set $A$ such that
	\begin{itemize}
		\item $\supp(\alpha)\subseteq A \neq \mathcal{G}^{(0)}$, and
		\item $\mu(A) < \frac{\eta}{2}$ for all $\mu\in M(\mathcal{G})$.
	\end{itemize}
	Then by Lemma \ref{lem:ComparisonByFullGroup} there is an element $\gamma_0 \in [[\mathcal{G}]]$ ($\subseteq [\mathcal{G}]$) such that $\gamma_0(A) \subseteq \mathcal{G}^{(0)}\setminus B$ and $A\cup \supp(\gamma_0) \neq \mathcal{G}^{(0)}$. Moreover, by the same lemma, if $M(\mathcal{G}) \neq \emptyset$ then we may require that $\supp(\gamma_0)\subseteq A\cup \gamma_0(A)$. It follows that, in either case, we can find a clopen set $D$ containing $A\cup \supp(\gamma_0)$ and an element $\sigma\in [\mathcal{G}]$ (using Lemma \ref{lem:ComparisonByFullGroup} again) such that $\sigma(D) \subseteq C$.  
	
	We define $\tau := \sigma^{-1} \tau_0 \sigma$ and $\gamma := [ \gamma_0, \tau  ]$. Then $\tau( D ) \cap D = \emptyset$ and $\tau$ is in  $N$ because $N$ is normal in $[\mathcal{G}]$ by Proposition \ref{prop:Normality}. It follows that $\gamma$ also belongs to $N$. Since $\tau^{-1}(D)$ is disjoint from $D$ and $\supp(\gamma_0)$ is contained in $D$, the map $\gamma_0^{-1}$ acts trivially on $\tau^{-1}(D)$, and in particular on $\tau^{-1}(A)$. From this we deduce that
	\[
	\gamma(A) = \gamma_0\tau \gamma_0^{-1}\tau^{-1}(A) = \gamma_0(A) \subseteq \mathcal{G}^{(0)}\setminus B.
	\]
	In particular $\gamma( \supp(\alpha)  ) \cap \supp(\beta) =\emptyset$, which completes the proof thanks to the claim in the first paragraph of the proof.
\end{proof}

\begin{thm} \label{thm:Simplicity}
	Let $\mathcal{G}$ be a minimal, second countable, \'etale groupoid whose unit space is a Cantor space. Suppose $\mathcal{G}$ has comparison. Then the commutator subgroup $[\mathcal{G}]'$ of the full group $[\mathcal{G}]$ is simple. 
\end{thm}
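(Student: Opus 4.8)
The plan is to deduce the theorem directly from the two propositions already established, namely Proposition \ref{prop:Normality} and Proposition \ref{prop:containment}. This is the standard mechanism by which one proves that a commutator subgroup is simple: it suffices to know, first, that any subgroup normalized by $[\mathcal{G}]'$ is automatically normal in the whole group $[\mathcal{G}]$, and second, that every nontrivial normal subgroup of $[\mathcal{G}]$ swallows $[\mathcal{G}]'$. Together these force any nontrivial normal subgroup of $[\mathcal{G}]'$ to equal $[\mathcal{G}]'$.

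Concretely, I would begin by letting $N$ be an arbitrary nontrivial normal subgroup of $[\mathcal{G}]'$, with the goal of showing $N = [\mathcal{G}]'$. Since $N \trianglelefteq [\mathcal{G}]'$, in particular $N$ is a subgroup of $[\mathcal{G}]$ (because $[\mathcal{G}]' \subseteq [\mathcal{G}]$) that is normalized by $[\mathcal{G}]'$. Proposition \ref{prop:Normality} then applies and upgrades this to the stronger conclusion that $N$ is normal in the full group $[\mathcal{G}]$ itself. This is the conceptual heart of the argument; everything else is formal.

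With $N$ now a nontrivial normal subgroup of $[\mathcal{G}]$, I would invoke Proposition \ref{prop:containment} to conclude that $[\mathcal{G}]' \subseteq N$. On the other hand, $N$ was assumed to be a subgroup of $[\mathcal{G}]'$, so $N \subseteq [\mathcal{G}]'$. The two inclusions give $N = [\mathcal{G}]'$, establishing that $[\mathcal{G}]'$ has no nontrivial proper normal subgroups, i.e.\ it is simple.

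I do not expect any genuine obstacle at this final stage, since all the analytic and dynamical content has already been absorbed into the preceding results; the comparison hypothesis and the measure-theoretic estimates of Lemma \ref{lem:Kerr} were consumed in proving Lemma \ref{lem:DecompNormal}, Proposition \ref{prop:Normality}, and Proposition \ref{prop:containment}. The only subtlety worth flagging explicitly is the logical bookkeeping: one must verify that $N$ qualifies as input to \emph{both} propositions, which requires noting that ``normal in $[\mathcal{G}]'$'' yields ``normalized by $[\mathcal{G}]'$'' (the hypothesis of Proposition \ref{prop:Normality}) and that the output ``normal in $[\mathcal{G}]$'' together with nontriviality is exactly the hypothesis of Proposition \ref{prop:containment}. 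This interlocking of the two statements is precisely what makes the short deduction go through.
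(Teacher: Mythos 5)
Your proposal is correct and follows exactly the same route as the paper: invoke Proposition \ref{prop:Normality} to upgrade normality in $[\mathcal{G}]'$ to normality in $[\mathcal{G}]$, then apply Proposition \ref{prop:containment} to get $[\mathcal{G}]' \subseteq N$, and conclude equality from $N \subseteq [\mathcal{G}]'$. The bookkeeping you flag (normal in $[\mathcal{G}]'$ implies normalized by $[\mathcal{G}]'$) is precisely the observation the paper relies on implicitly.
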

\begin{proof}
	Suppose $N$ is a nontrivial normal subgroup of $[\mathcal{G}]'$. By Proposition \ref{prop:Normality} $N$ is normal in $[\mathcal{G}]$. Therefore by Proposition \ref{prop:containment} $N$ contains $[\mathcal{G}]'$, which implies that they are equal.
\end{proof}

It is worth mentioning that, according to Theorem \ref{thm:Simplicity}, the full group $[\mathcal{G}]$ itself is simple if and only if it is perfect, i.e, equal to its commutator subgroup. Whether $[\mathcal{G}]$ is perfect is an open question even for Cantor minimal systems (see \cite{IbarluciaMelleray:ETDS}). On the other hand, it is known that the topological full group $[[\mathcal{G}]]$ is not perfect (one can deduce this from the existence of the index map; see \cite{GPS:Israel} and \cite{Matui:PLMS}).

\bibliography{SimplicityCommutator_2}
\bibliographystyle{plain}

\end{document}